 \newtheorem{theorem}{Theorem}
 \newtheorem{lemma}[theorem]{Lemma}
\theoremstyle{definition}
 \newtheorem{definition}[theorem]{Definition}
\theoremstyle{remark}
\begin{document}
\title[Radon transform]{Inversion of higher dimensional Radon transforms of seismic-type}
\author[H.~Chihara]{Hiroyuki Chihara}
\address{College of Education, University of the Ryukyus, Nishihara, Okinawa 903-0213, Japan}
\email{aji@rencho.me}
\thanks{Supported by the JSPS Grant-in-Aid for Scientific Research \#19K03569.}
\subjclass[2010]{Primary 44A12; Secondary 65R10, 86A15, 86A22}
\keywords{Radon transform, inversion formula, seismology}
\begin{abstract}
We study integral transforms mapping a function on the Euclidean space 
to the family of its integration on some hypersurfaces, 
that is, a function of hypersurfaces. 
The hypersurfaces are given by the graphs of functions 
with fixed axes of the independent variables, 
and are imposed some symmetry with respect to the axes.  
These transforms are higher dimensional version of generalization of 
the parabolic Radon transform and 
the hyperbolic Radon transform arising from seismology. 
We prove the inversion formulas for these transforms 
under some vanishing and symmetry conditions of functions. 
\end{abstract}
\maketitle
\section{Introduction}
\label{section:introduction}
Let $n$ be a positive integer. 
Fix arbitrary 
$c=(c_1,\dotsc,c_n)\in\mathbb{R}^n$ and 
$\alpha_1,\dotsc,\alpha_n,\beta>1$. 
Set $\alpha=(\alpha_1,\dotsc,\alpha_n)$ for short.  
Let 
$$
(x,y)=(x_1,\dotsc,x_n,y),\  
(s,u)=(s_1,\dotsc,s_n,u) 
\in 
\mathbb{R}^{n+1}
=
\mathbb{R}^n\times\mathbb{R}
$$ 
be independent variables of functions. 
We study the inversions of the integral transforms 
$\mathcal{P}_{\alpha}f(s,u)$,  
$\mathcal{Q}_{\alpha}f(s,u)$ 
and  
$\mathcal{R}_{\alpha,\beta}f(s,u)$ of a function $f(x,y)$. 
These are the integrations of $f(x,y)$ 
on some special families of hypersurfaces. 
We do not deal with the general hypersurfaces. 
The precise definitions of our transforms are the following. 
\par
Firstly, $\mathcal{P}_{\alpha}f(s,u)$ is defined by 
\begin{align*}
  \mathcal{P}_{\alpha}f(s,u)
& =
  \int_{\mathbb{R}^n}
  f
  \left(
  x,
  \sum_{i=1}^n
  s_i
  \lvert{x_i-c_i}\rvert^{\alpha_i}
  +
  u
  \right)
  dx
\intertext{which is}
  \mathcal{P}_{\alpha}f(s,u)
& =
  \int_{\mathbb{R}^n}
  f
  \left(
  x+c,
  \sum_{i=1}^n
  s_i
  \lvert{x_i}\rvert^{\alpha_i}
  +
  u
  \right)
  dx. 
\end{align*}
$\mathcal{P}_{\alpha}f(s,u)$ is the integration of $f$ on a hypersurface 
$$
\Gamma_\mathcal{P}(\alpha;s,u)
=
\left\{
(x,y)\in\mathbb{R}^{n+1}
\ : \ 
y
=
\sum_{i=1}^n
s_i
\lvert{x_i-c_i}\rvert^{\alpha_i}
+
u
\right\}, 
$$
and $dx$ is not the standard volume of $\Gamma_\mathcal{P}(\alpha;s,u)$ 
induced by the Euclidean metric of $\mathbb{R}^{n+1}$. 
$\Gamma_\mathcal{P}(2,\dotsc,2;s,u)$ is a paraboloid if $s_i\ne0$ for all $i=1,\dotsc,n$. 
In particular, when $n=1$, 
$\mathcal{P}_2f(s,u)$ is an integration over a parabola in $\mathbb{R}^2$, 
and is called the parabolic Radon transform of $f$ in seismology. 
We assume that 
\begin{align}
& f(x_1,\dotsc,x_{i-1},-x_i+c_i,x_{i+1},\dotsc,x_n,y)
\nonumber
\\
  =
& f(x_1,\dotsc,x_{i-1},x_i+c_i,x_{i+1},\dotsc,x_n,y)
\label{equation:symmetry1}
\end{align}
for $i=1,\dotsc,n$, 
that is, 
$f(-x+c,y)=f(x+c,y)$. 
If we split $f(x)$ into the even and odd parts 
in some $x_i$ with respect to the hyperplane $x_i=c_i$ in $\mathbb{R}^n$, 
then the contribution of the odd part to $\mathcal{P}_\alpha{f}$ becomes $0$, 
and the injectivity of $\mathcal{P}_\alpha$ fails to hold. 
\par
Secondly, $\mathcal{Q}_{\alpha}f(s,u)$ is defined by 
\begin{align*}
  \mathcal{Q}_{\alpha}f(s,u)
&  =
  \int_{\mathbb{R}^n}
  f
  \left(
  x,
  \sum_{i=1}^n
  s_i
  (x_i-c_i)\lvert{x_i-c_i}\rvert^{\alpha_i-1}
  +
  u
  \right)
  dx
\intertext{which is}
  \mathcal{Q}_{\alpha}f(s,u)
& =
  \int_{\mathbb{R}^n}
  f
  \left(
  x+c,
  \sum_{i=1}^n
  s_i
  x_i\lvert{x_i}\rvert^{\alpha_i-1}
  +
  u
  \right)
  dx.
\end{align*}
$\mathcal{Q}_{\alpha}f(s,u)$ is the integration of $f$ on a hypersurface 
$$
\Gamma_\mathcal{Q}(\alpha;s,u)
=
\left\{
(x,y)\in\mathbb{R}^{n+1}
\ : \ 
y
=
\sum_{i=1}^n
s_i
(x_i-c_i)\lvert{x_i-c_i}\rvert^{\alpha_i-1}
+
u
\right\} 
$$
for $f$ and the measure is not the standard volume of 
$\Gamma_\mathcal{Q}(\alpha;s,u)$ 
induced by the Euclidean metric of $\mathbb{R}^{n+1}$. 
We do not need some symmetry conditions like 
\eqref{equation:symmetry1} for $\mathcal{Q}_\alpha$. 
Indeed there is no specific relationship between 
the values of $y=\sum s_ix_i\lvert{x_i}\rvert^{\alpha_i-1}+u$ 
for $x_i$ and $-x_i$ with some $i=1,\dotsc,n$.   
\par
Thirdly, $\mathcal{R}_{\alpha,\beta}f(s,u)$ is defined by 
\begin{align*}
&  \mathcal{R}_{\alpha,\beta}f(s,u)
\\
  =
& \int_{\sum s_i\lvert{x_i-c_i}\rvert^{\alpha_i}+u>0}
  f
  \left(
  x,
  \left(
  \sum_{i=1}^n
  s_i
  \lvert{x_i-c_i}\rvert^{\alpha_i}
  +
  u
  \right)^{1/\beta}
  \right)
  \bigg/
  \left(
  \sum_{i=1}^n
  s_i
  \lvert{x_i-c_i}\rvert^{\alpha_i}
  +
  u
  \right)^{1/\beta}
  dx
\intertext{which is}
&  \mathcal{R}_{\alpha,\beta}f(s,u)
\\
  =
& \int_{\sum s_i\lvert{x_i}\rvert^{\alpha_i}+u>0}
  f
  \left(
  x+c,
  \left(
  \sum_{i=1}^n
  s_i
  \lvert{x_i}\rvert^{\alpha_i}
  +
  u
  \right)^{1/\beta}
  \right)
  \bigg/
  \left(
  \sum_{i=1}^n
  s_i
  \lvert{x_i}\rvert^{\alpha_i}
  +
  u
  \right)^{1/\beta}
  dx.
\end{align*}
$\mathcal{R}_{\alpha,\beta}f(s,u)$ is the integration of $f$ on a hypersurface 
$$
\Gamma_\mathcal{R}(\alpha,\beta;s,u)
=
\left\{
(x,y)\in\mathbb{R}^{n+1}
\ : \ 
\lvert{y}\rvert^\beta
=
\sum_{i=1}^n
s_i
\lvert{x_i-c_i}\rvert^{\alpha_i}
+
u
\right\} 
$$
for $f$ satisfying \eqref{equation:symmetry1} and 
\begin{equation}
f(x+c,-y)=f(x+c,y). 
\label{equation:symmetry2} 
\end{equation}
The condition \eqref{equation:symmetry1} 
is required for the injectivity of 
$\mathcal{R}_{\alpha,\beta}$ 
in the same way as $\mathcal{P}_\alpha$. 
We need the condition \eqref{equation:symmetry2} to obtain the inversion. 
Our method of the proof of the inversion is 
the reduction to the standard Radon transform on $\mathbb{R}^{n+1}$, 
and we can only deal with functions $f(x,y)$ 
defined on the whole space $\mathbb{R}^{n+1}$.   
Note that the measure is not the standard volume of 
$\Gamma_\mathcal{R}(\alpha,\beta;s,u)$ 
induced by the Euclidean metric of $\mathbb{R}^{n+1}$. 
To resolve the singularity at 
$\sum s_i\lvert{x_i-c_i}\rvert^{\alpha_i}+u=0$, 
it is natural to assume that $f(x,0)=0$. 
$\Gamma_\mathcal{R}(2,\dotsc,2,2;s,u)$ 
is a quadratic hypersurface like hyperboloids if 
$s_1,\dotsc,s_n\ne0$. 
In particular, when $n=1$, 
$\mathcal{R}_{2,2}f(s,u)$ is called the hyperbolic Radon transform of $f$ in seismology. 
\par
Here we recall the background of our transforms. 
We start with the transform on $\mathbb{R}^2$, 
that is, the integration on the plane curves. 
In the early 1980s, 
Cormack introduced the Radon transform of a family of plane curves and 
studied the basic properties in his pioneering works 
\cite{Cormack1981} and \cite{Cormack1982}. 
More than a decade later, Denecker, van Overloop and Sommen 
in \cite{Denecker1998} studied the parabolic Radon transform 
without fixed axis, in particular, the support theorem, 
higher dimensional generalization and etc. 
In 2011 which is more than ten years later, 
Jollivet, Nguyen and Truong in \cite{Jollivet2011} 
studied some properties of the parabolic Radon transform with fixed axis, 
which is the exact contour integration. 
Recently, Moon established the inversion of 
the parabolic Radon transform $\mathcal{P}_2$ 
and the inversion of the hyperbolic Radon transform 
$\mathcal{R}_{2,2}$ respectively in his interesting paper \cite{Moon2016b}. 
He introduced some change of variables in $(x,y)\in\mathbb{R}^2$ 
so that the Radon transform of a family of plane curves became 
so-called the X-ray transform, that is, the Radon transform of a family of lines. 
More recently, replacing $x^2$ by some function $\varphi(x)$ 
in the parabolic Radon transform, 
Ustaoglu developed Moon's idea 
to try to obtain the inversion of more general Radon transforms 
on the plane in \cite{Ustaoglu2017}. 
More recently, the author studied 
$\mathcal{P}_\alpha$, $\mathcal{Q}_\alpha$ and $\mathcal{R}_{\alpha,\beta}$ 
on $\mathbb{R}^2$, 
and obtained the inversion formulas for them in \cite{Chihara2019}. 
Those are the mathematical background of our transforms on the plane. 
For the scientific background of 
the parabolic Radon transform $\mathcal{P}_2$ 
and the hyperbolic transform $\mathcal{R}_{2,2}$, 
see 
\cite{Hampson1986} and the introduction of \cite{Moon2016b} 
for both of them, 
\cite{Maeland1998}, \cite{Maeland2000} 
for the parabolic Radon transform,  
and  
\cite{Bickel2000} 
for the hyperbolic Radon transform respectively. 
\par
Here we turn to the higher dimensional case $n\geqq2$. 
There are no mathematical results on our transforms so far. 
Unfortunately, however, 
our transforms with $n\geqq2$ have no scientific background at this time. 
Here we quote some works on the transforms which are integrations on hypersurfaces. 
Moon studied integral transforms over ellipsoids 
in \cite{Moon2014} and \cite{Moon2016a}.  
This arises in synthetic aperture radar (SAR), 
ultrasound reflection tomography (URT) and radio tomography. 
Recently, transforms on cones have been intensively studied. 
This models Compton cameras, 
and is sometimes called cone transform or Compton transform.  
See, e.g., \cite{Kuchment2016} and references therein. 
\par
The aim of the present paper is 
to establish the inversion formulas for 
$\mathcal{P}_\alpha$, $\mathcal{Q}_\alpha$ and $\mathcal{R}_{\alpha,\beta}$. 
Here we introduce some function spaces to state our results. 
These function spaces consists of Schwartz functions on $\mathbb{R}^{n+1}$ 
satisfying some symmetries and vanishing conditions. 
The symmetries are natural for our transforms, 
and the vanishing conditions are used 
for justifying the change of variables 
for the reduction to the standard transform. 
We denote the set of all Schwartz functions on $\mathbb{R}^{n+1}$ by 
$\mathscr{S}(\mathbb{R}^{n+1})$.
\begin{definition}
\label{definition:functionspaces} 
Fix $c=(c_1,\dotsc,c_n)\in\mathbb{R}^n$. 
Let $m=(m_1,\dotsc,m_n)$ be a multi-index of nonnegative integers. 
We define function spaces 
$\mathscr{S}_{c,m}(\mathbb{R}^{n+1})$, 
$\mathscr{S}_{c,m}^\mathcal{P}(\mathbb{R}^{n+1})$ 
and  
$\mathscr{S}_{c,m}^\mathcal{R}(\mathbb{R}^{n+1})$ 
as follows. 
\begin{itemize}
\item[(i)] 
$\mathscr{S}_{c,m}(\mathbb{R}^{n+1})$ 
is the set of all $f(x,y)\in\mathscr{S}(\mathbb{R}^{n+1})$ 
satisfying the following vanishing conditions
$$
\frac{\partial^k f}{\partial x_i^k}(x+c,y)\bigg\vert_{x_i=0}=0,
\quad
k=0,1,\dotsc,m_i,\  
i=1,\dotsc,n.
$$
\item[(ii)] 
$\mathscr{S}_{c,m}^\mathcal{P}(\mathbb{R}^{n+1})$ 
is the set of all $f(x,y)\in\mathscr{S}_{c,m}(\mathbb{R}^{n+1})$ 
satisfying the symmetry \eqref{equation:symmetry1}.  
\item[(iii)]  
$\mathscr{S}_{c,m}^\mathcal{R}(\mathbb{R}^{n+1})$ 
is the set of all $f(x,y)\in\mathscr{S}_{c,m}^\mathcal{P}(\mathbb{R}^{n+1})$ 
satisfying the symmetry \eqref{equation:symmetry2} and the vanishing condition 
$f(x+c,0)=0$.  
\end{itemize}
\end{definition}
Recall $\alpha_1,\dotsc,\alpha_n,\beta>1$. 
Throughout the present paper we assume that the vanishing order 
$m_i$ at $x_i=c_i$ satisfies $m_i \geqq \alpha_i-2$ for all $i=1,\dotsc,n$. 
This condition guarantees the reduction to the standard Radon transform.  
Our main results are the following. 
\begin{theorem}
\label{theorem:main} 
Let $c=(c_1,\dotsc,c_n)\in\mathbb{R}^n$, 
and let $\alpha_1,\dotsc,\alpha_n,\beta>1$. 
Suppose that $m_i$ is a nonnegative integer satisfying 
$m_i \geqq \alpha_i-2$ for all $i=1,\dotsc,n$. 
\begin{itemize}
\item[{\rm (i)}] 
For any $f\in\mathscr{S}_{c,m}^\mathcal{P}(\mathbb{R}^{n+1})$, 
if $n$ is odd, then 
\begin{align}
  f(x,y)
& =
  \frac{4\cdot(-1)^{(n-1)/2}}{(4\pi)^{n+1}}
  \left(
  \prod_{i=1}^n
  \alpha_i
  \lvert{x_i-c_i}\rvert^{\alpha_i-1}
  \right)
\nonumber
\\
& \times
  \int_{\mathbb{R}^n}
  \left(
  \operatorname{pv}
  \int_{-\infty}^\infty
  \frac{\partial_u^n \mathcal{P}_{\alpha}f(s,u)}{y-\sum_{i=1}^ns_i\lvert{x_i-c_i}\rvert^{\alpha_i}-u}
  du
  \right)
  ds,
\label{equation:inversion1} 
\end{align}
and if $n$ is even, then 
\begin{align}
  f(x,y)
& =
  \frac{(-1)^{n/2}}{(4\pi)^n}
  \left(
  \prod_{i=1}^n
  \alpha_i
  \lvert{x_i-c_i}\rvert^{\alpha_i-1}
  \right)
\nonumber
\\
& \times
  \int_{\mathbb{R}^n}
  (\partial_u^n \mathcal{P}_{\alpha}f)
  \left(
  s,
  y-\sum_{i=1}^ns_i\lvert{x_i-c_i}\rvert^{\alpha_i}
  \right)
  ds.
\label{equation:inversion2} 
\end{align}
\item[{\rm (ii)}] 
For any $f\in\mathscr{S}_{c,m}(\mathbb{R}^{n+1})$, 
if $n$ is odd, then 
\begin{align}
  f(x,y)
& =
  \frac{2\cdot(-1)^{(n-1)/2}}{(2\pi)^{n+1}}
  \left(
  \prod_{i=1}^n
  \alpha_i
  \lvert{x_i-c_i}\rvert^{\alpha_i-1}
  \right)
\nonumber
\\
& \times
  \int_{\mathbb{R}^n}
  \left(
  \operatorname{pv}
  \int_{-\infty}^\infty
  \frac{\partial_u^n \mathcal{Q}_{\alpha}f(s,u)}{y-\sum_{i=1}^ns_i(x_i-c_i)\lvert{x_i-c_i}\rvert^{\alpha_i-1}-u}
  du
  \right)
  ds,
\label{equation:inversion5} 
\end{align}
and if $n$ is even, then 
\begin{align}
  f(x,y)
& =
  \frac{(-1)^{n/2}}{(2\pi)^n}
  \left(
  \prod_{i=1}^n
  \alpha_i
  \lvert{x_i-c_i}\rvert^{\alpha_i-1}
  \right)
\nonumber
\\
& \times
  \int_{\mathbb{R}^n}
  (\partial_u^n \mathcal{Q}_{\alpha}f)
  \left(
  s,
  y-\sum_{i=1}^ns_i(x_i-c_i)\lvert{x_i-c_i}\rvert^{\alpha_i-1}
  \right)
  ds.
\label{equation:inversion6} 
\end{align}
\item[{\rm (iii)}] 
For any $f\in\mathscr{S}_{c,m}^\mathcal{R}(\mathbb{R}^{n+1})$, 
if $n$ is odd, then 
\begin{align}
  f(x,y)
& =
  \frac{4\cdot(-1)^{(n-1)/2}}{(4\pi)^{n+1}}
  \left(
  \prod_{i=1}^n
  \alpha_i
  \lvert{x_i-c_i}\rvert^{\alpha_i-1}
  \right)
  \lvert{y}\rvert
\nonumber
\\
& \times
  \int_{\mathbb{R}^n}
  \left(
  \operatorname{pv}
  \int_{-\infty}^\infty
  \frac{\partial_u^n \mathcal{R}_{\alpha,\beta}f(s,u)}{\lvert{y}\rvert^\beta-\sum_{i=1}^ns_i\lvert{x_i-c_i}\rvert^{\alpha_i}-u}
  du
  \right)
  ds,
\label{equation:inversion3} 
\end{align}
and if $n$ is even, then 
\begin{align}
  f(x,y)
& =
  \frac{(-1)^{n/2}}{(4\pi)^n}
  \left(
  \prod_{i=1}^n
  \alpha_i
  \lvert{x_i-c_i}\rvert^{\alpha_i-1}
  \right)
  \lvert{y}\rvert
\nonumber
\\
& \times
  \int_{\mathbb{R}^n}
  (\partial_u^n \mathcal{R}_{\alpha,\beta}f)
  \left(
  s,
  \lvert{y}\rvert^\beta-\sum_{i=1}^ns_i\lvert{x_i-c_i}\rvert^{\alpha_i}
  \right)
  ds.
\label{equation:inversion4} 
\end{align}
\end{itemize}
\end{theorem}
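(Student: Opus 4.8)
The plan is to reduce each of the three transforms to the standard Radon transform on $\mathbb{R}^{n+1}$ written in the graph parametrization
\[
  \mathcal{T}g(s,u)=\int_{\mathbb{R}^n}g\Bigl(t,\textstyle\sum_{i=1}^ns_it_i+u\Bigr)\,dt,
\]
and then to invert $\mathcal{T}$ by the Fourier slice theorem, treating $n$ odd and $n$ even separately. For $\mathcal{Q}_\alpha$ I would use the change of variables $t_i=x_i\lvert x_i\rvert^{\alpha_i-1}$ (after the centering $x\mapsto x+c$), a smooth increasing bijection of $\mathbb{R}$ with $dx_i=\alpha_i^{-1}\lvert t_i\rvert^{1/\alpha_i-1}\,dt_i$; setting $g(t,y)=\bigl(\prod_{i=1}^n\alpha_i^{-1}\lvert t_i\rvert^{1/\alpha_i-1}\bigr)f\bigl(X(t)+c,y\bigr)$, where $X_i$ denotes the inverse of $x_i\mapsto x_i\lvert x_i\rvert^{\alpha_i-1}$, turns $\mathcal{Q}_\alpha f$ into $\mathcal{T}g$ exactly. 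For $\mathcal{P}_\alpha$ the map $t_i=\lvert x_i\rvert^{\alpha_i}$ is two-to-one, so I would first use the symmetry \eqref{equation:symmetry1} to replace $\int_{\mathbb{R}^n}$ by $2^n\int_{(0,\infty)^n}$, then substitute and extend the resulting $g$ by zero to $\{t_i<0\}$; this produces the extra factor $2^n$ that later combines with $(2\pi)^{\bullet}$ into $(4\pi)^{\bullet}$. For $\mathcal{R}_{\alpha,\beta}$ I would additionally substitute $w=\sum s_it_i+u$ on the region $w>0$: since the integrand there is $f(\,\cdot\,,w^{1/\beta})/w^{1/\beta}$, I set $g(t,w)=(\text{weight})\,f(\,\cdot\,,w^{1/\beta})/w^{1/\beta}$ and extend by zero to $w\le0$, so that again $\mathcal{R}_{\alpha,\beta}f=\mathcal{T}g$; the symmetry \eqref{equation:symmetry2} then lets me recover $f$ for $y<0$ from $y>0$, which is why \eqref{equation:inversion3}, \eqref{equation:inversion4} are written with $\lvert y\rvert$ and carry the factor $\lvert y\rvert$ coming from $dy=\beta^{-1}w^{1/\beta-1}\,dw$.

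The delicate point, and the step I expect to be the main obstacle, is to check that the functions $g$ built above, extended by zero, are admissible for the inversion of $\mathcal{T}$; this is exactly where the hypothesis $m_i\geqq\alpha_i-2$ enters. Near $t_i=0$ the weight $\lvert t_i\rvert^{1/\alpha_i-1}$ is singular, while the vanishing condition in Definition \ref{definition:functionspaces} forces $f(x+c,y)=O(\lvert x_i\rvert^{m_i+1})$, hence $f(X(t)+c,y)=O(\lvert t_i\rvert^{(m_i+1)/\alpha_i})$; the product is $O(\lvert t_i\rvert^{(m_i+2-\alpha_i)/\alpha_i})$, which is bounded precisely when $m_i\geqq\alpha_i-2$. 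I would upgrade this to the regularity and decay actually needed for the Fourier computation below (so that $g\in L^1$, $\widehat g$ decays, and $\lvert\tau\rvert^n\widehat g$ is integrable), using that $f$ is Schwartz and vanishes to the prescribed orders; for $\mathcal{R}_{\alpha,\beta}$ the vanishing $f(x+c,0)=0$ is what tames the singularity of $1/w^{1/\beta}$ at $w=0$. Verifying this admissibility, together with the legitimacy of the change of variables and of extending by zero across $\{t_i=0\}$ and $\{w=0\}$, is the technical heart of the argument.

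To invert $\mathcal{T}$ I would take the Fourier transform in $u$; the Fourier slice identity gives $\widehat{\mathcal{T}g}(s,\tau)=\widehat g(-s\tau,\tau)$, where $\widehat g$ is the full Fourier transform of $g$. Inverting $\widehat g$ and changing variables $\xi=-s\tau$ (Jacobian $\lvert\tau\rvert^{n}$) yields
\[
  g(t,y)=\frac{1}{(2\pi)^{n+1}}\int_{\mathbb{R}^n}\int_{-\infty}^{\infty}\lvert\tau\rvert^{n}\,\widehat{\mathcal{T}g}(s,\tau)\,e^{i\tau\left(y-\sum_{i=1}^ns_it_i\right)}\,d\tau\,ds.
\]
Here the multiplier $\lvert\tau\rvert^{n}$ is the source of the dichotomy: when $n$ is even it equals $\tau^n$, so the inner integral reproduces $(-1)^{n/2}(2\pi)\,\partial_u^n\mathcal{T}g$ evaluated at $y-\sum s_it_i$, giving the local formulas; when $n$ is odd, $\lvert\tau\rvert^n=\sgn(\tau)\,\tau^n$ and the extra factor $\sgn(\tau)$ is the multiplier of a constant times the Hilbert transform, which produces the principal value integral in $u$ and the constant $(2\pi)^{n+1}$. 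This step is classical Radon inversion; I only need to record the exact constants $(-1)^{(n-1)/2}$, $(-1)^{n/2}$ and the powers of $2\pi$.

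Finally I would substitute back. Undoing $t_i=\lvert x_i-c_i\rvert^{\alpha_i}$ (for $\mathcal{P}_\alpha$, $\mathcal{R}_{\alpha,\beta}$) or $t_i=(x_i-c_i)\lvert x_i-c_i\rvert^{\alpha_i-1}$ (for $\mathcal{Q}_\alpha$) turns the weight $\prod\alpha_i^{-1}\lvert t_i\rvert^{1/\alpha_i-1}$ into $\bigl(\prod_{i=1}^n\alpha_i\lvert x_i-c_i\rvert^{\alpha_i-1}\bigr)^{-1}$, so solving $g=(\text{weight})f$ for $f$ produces exactly the prefactor $\prod_{i=1}^n\alpha_i\lvert x_i-c_i\rvert^{\alpha_i-1}$ in \eqref{equation:inversion1}--\eqref{equation:inversion4}; the $2^n$ from the symmetry step combines with $(2\pi)^n$ or $(2\pi)^{n+1}$ to give $(4\pi)^n$ or $(4\pi)^{n+1}$ for $\mathcal{P}_\alpha$ and $\mathcal{R}_{\alpha,\beta}$, whereas $\mathcal{Q}_\alpha$ has no such factor and keeps $(2\pi)^{\bullet}$. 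For $\mathcal{R}_{\alpha,\beta}$ the back-substitution $w=\lvert y\rvert^\beta$ together with $dy=\beta^{-1}w^{1/\beta-1}\,dw$ and \eqref{equation:symmetry2} supplies the factor $\lvert y\rvert$ and extends the identity to all $y$. Collecting constants in each parity case then gives \eqref{equation:inversion1}--\eqref{equation:inversion6}.
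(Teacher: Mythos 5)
Your proposal is correct in outline and, at its core, follows the same reduction as the paper: the same changes of variables ($\xi_i=\lvert x_i-c_i\rvert^{\alpha_i}$ for $\mathcal{P}_\alpha$ and $\mathcal{R}_{\alpha,\beta}$, the signed power for $\mathcal{Q}_\alpha$, and $\lvert y\rvert^\beta$ in the last slot for $\mathcal{R}_{\alpha,\beta}$), the same use of \eqref{equation:symmetry1} to pass to $2^n\int_{(0,\infty)^n}$ and extend by zero, the same weighted pullback $g$ (the paper's $F^{\mathcal{P}}_\alpha$, $F^{\mathcal{Q}}_\alpha$, $F^{\mathcal{R}}_{\alpha,\beta}$), and the same identification of where $m_i\geqq\alpha_i-2$ and $f(x+c,0)=0$ enter. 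The one genuine divergence is the inversion step itself. The paper quotes the classical inversion formula for $\mathcal{X}$ parametrized by $(\omega,t)\in\mathbb{S}^n_+\times\mathbb{R}$ (Theorem~\ref{theorem:inversionF}) and then devotes Section~\ref{section:coordinatization} to the coordinatization $\omega=(-s,1)/\sqrt{1+\lvert s\rvert^2}$ and the Jacobians \eqref{equation:jacobian1}, \eqref{equation:jacobian2} needed to rewrite the hemisphere integral over $s\in\mathbb{R}^n$; you instead invert the graph-form transform $\mathcal{T}$ directly in the $(s,u)$ coordinates via the Fourier slice identity $\widehat{\mathcal{T}g}(s,\tau)=\widehat g(-\tau s,\tau)$ and the substitution $\zeta=-\tau s$ with Jacobian $\lvert\tau\rvert^n$. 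Your route is self-contained and bypasses the hemisphere computation entirely, and your constants do check out ($(-i)^n=(-1)^{n/2}$ for $n$ even; the $\operatorname{sgn}\tau$ multiplier yielding $2(-1)^{(n-1)/2}$ times the principal-value kernel for $n$ odd; $2^n(2\pi)^{n+1}=\tfrac12(4\pi)^{n+1}$ and $2^n(2\pi)^n=(4\pi)^n$). What it buys you in economy it costs in analysis: Fourier inversion must be justified for a $g$ that is merely bounded and discontinuous across $\{t_i=0\}$ and $\{w=0\}$, whereas the paper discharges the corresponding burden by proving the decay estimates \eqref{equation:decay1}--\eqref{equation:decay3} via the Taylor expansions of Lemma~\ref{theorem:taylor} and then invoking the known validity of Theorem~\ref{theorem:inversionF} for rapidly decaying measurable functions; you correctly flag this as the technical heart, and completing it would require an analogue of Lemma~\ref{theorem:change}. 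One small slip: in part (iii) the factor $\lvert y\rvert$ does not come from the Jacobian $dy=\beta^{-1}w^{1/\beta-1}\,dw$ (no $y$-integration occurs); it comes from solving $g(t,w)=(\text{weight})\,f(\cdot,w^{1/\beta})/w^{1/\beta}$ for $f$, i.e., from the $1/w^{1/\beta}$ already built into $\mathcal{R}_{\alpha,\beta}$. This does not affect the final formulas.
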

We develop the method in \cite{Chihara2019} and prove Theorem~\ref{theorem:main}. 
The basic idea is the reduction to the standard Radon transform due to Moon in \cite {Moon2016b}. 
We begin with the basic facts on the standard Radon transform in Section~\ref{section:radon}. 
Section~\ref{section:coordinatization} is devoted to studying the coordinatization 
of the upper hemisphere by $s\in\mathbb{R}^n$. 
Next, we prepare some lemmas related with vanishing conditions in Section~\ref{section:vanishing}. 
Finally, we prove Theorem~\ref{theorem:main} in Section~\ref{section:proofs}.  
\section{The standard Radon transform}
\label{section:radon}
We recall the definition of the standard Radon transform on $\mathbb{R}^{n+1}$ 
and the inversion formula. 
Let $\mathbb{S}^n$ be the unit sphere in $\mathbb{R}^n$ defined by 
$$
\mathbb{S}^n
=
\{
\omega=(\omega_1,\dotsc,\omega_{n+1})\in\mathbb{R}^{n+1}
\ : \ 
\lvert\omega\rvert^2
=
\omega_1^2+\dotsb+\omega_{n+1}^2=1
\}, 
$$
and let $\mathbb{S}^n_+$ be the upper hemisphere 
$$
\mathbb{S}^n_+
=
\{
\omega=(\omega_1,\dotsc,\omega_{n+1})\in\mathbb{S}^n
\ : \ 
\omega_{n+1}>0
\}.
$$
Set 
$$
H(\omega,t)
=
\{
\zeta\in\mathbb{R}^{n+1}
\ : \
\langle{\zeta,\omega}\rangle=t
\},
\quad
\omega\in\mathbb{S}^n,\ 
t\in\mathbb{R},
$$
where $\langle\cdot,\cdot\rangle$ is the standard inner product of the Euclidean space. 
Note that $H(-\omega,-t)=H(\omega,t)$, and 
$$
H(\omega,t)
=
\{\zeta+t\omega \ : \ \zeta \in H(\omega,0)\} 
$$
since 
$\langle{\zeta+t\omega,\omega}\rangle=t$ for $\zeta \in H(\omega,0)$. 
$H(\omega,t)$ is a hyperplane in $\mathbb{R}^{n+1}$ 
which is perpendicular to $\omega$ and is passing through $t\omega$. 
In particular, $H(\omega,0)$ is the orthogonal complement of $\{\omega\}$ in $\mathbb{R}^{n+1}$. 
The Radon transform of a function $F(\zeta)$ is defined by 
$$
\mathcal{X}F(\omega,t)
=
\int_{H(\omega,t)}
F(\zeta)
dm(\zeta)
=
\int_{H(\omega,0)} 
F(t\omega+\zeta)
dm(\zeta),
$$
where $dm$ is the induced measure on the hyperplane 
from the Lebesgue measure on $\mathbb{R}^{n+1}$. 
Note that $\mathcal{X}F(-\omega,-t)=\mathcal{X}F(\omega,t)$. 
In the present paper, we deal with integrations on the graph of 
a function $\langle{s,\xi}\rangle+u$ of the variables $\xi\in\mathbb{R}^n$ 
with some constants $(s,u)\in\mathbb{R}^n\times\mathbb{R}$. 
It is easy to see that
\begin{equation}
\int_{\mathbb{R}^n}
F(\xi,\langle{s,\xi}\rangle+u)
d\xi
=
\frac{1}{\sqrt{1+\lvert{s}\rvert^2}}
\mathcal{X}F
\left(
\frac{(-s,1)}{\sqrt{1+\lvert{s}\rvert^2}},
\frac{u}{\sqrt{1+\lvert{s}\rvert^2}}
\right)
\label{equation:radonF} 
\end{equation} 
since 
$$
\{
(\xi,\langle{s,\xi}\rangle+u)
\ : \ 
\xi\in\mathbb{R}^n
\}
=
H
\left(
\frac{(-s,1)}{\sqrt{1+\lvert{s}\rvert^2}},
\frac{u}{\sqrt{1+\lvert{s}\rvert^2}}
\right), 
\quad
dm=\sqrt{1+\lvert{s}\rvert^2}d\xi. 
$$
The inversion formula is as follows. 
\begin{theorem}
\label{theorem:inversionF} 
For $F(\xi,\eta) \in \mathscr{S}(\mathbb{R}^{n+1})$, 
if $n$ is odd, then 
$$
F(\xi,\eta)
=
\frac{2\cdot(-1)^{(n-1)/2}}{(2\pi)^{n+1}}
\int_{\mathbb{S}^n_+}
\left(
\operatorname{pv}
\int_{-\infty}^\infty
\frac{\partial_t^n \mathcal{X}F(\omega,t)}{\langle(\xi,\eta),\omega\rangle-t}
dt
\right)
d\omega,
$$
and if $n$ is even, then 
$$
F(\xi,\eta)
=
\frac{(-1)^{n/2}}{(2\pi)^n}
\int_{\mathbb{S}^n_+}
(\partial_t^n\mathcal{X}F)
(\omega,\langle(\xi,\eta),\omega\rangle)
d\omega,
$$
where $d\omega$ is the induced measure on $\mathbb{S}^n$ from $\mathbb{R}^{n+1}$. 
\end{theorem}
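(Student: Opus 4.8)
The plan is to derive both formulas from the projection-slice (Fourier-slice) theorem combined with the Fourier inversion formula on $\mathbb{R}^{n+1}$ written in polar coordinates. Write $\hat{F}(\xi)=\int_{\mathbb{R}^{n+1}}F(\zeta)e^{-i\langle\zeta,\xi\rangle}\,d\zeta$ and let $\widetilde{\mathcal{X}F}(\omega,\sigma)=\int_{-\infty}^\infty\mathcal{X}F(\omega,t)e^{-i\sigma t}\,dt$ denote the one-dimensional Fourier transform in the affine parameter $t$. First I would establish the slice identity $\widetilde{\mathcal{X}F}(\omega,\sigma)=\hat{F}(\sigma\omega)$: unfolding the definition of $\mathcal{X}F$ as an integral over the hyperplanes $H(\omega,t)$ and recombining the $t$-integral with the integral over $H(\omega,t)$ into a single integral over $\mathbb{R}^{n+1}$, using $t=\langle\zeta,\omega\rangle$ on $H(\omega,t)$, reproduces exactly $\hat{F}(\sigma\omega)$. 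For $F\in\mathscr{S}(\mathbb{R}^{n+1})$ every integral here converges absolutely and Fubini applies, so no analytic difficulty arises at this stage.

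Next I would insert this into $F(\zeta)=(2\pi)^{-(n+1)}\int_{\mathbb{R}^{n+1}}\hat{F}(\xi)e^{i\langle\zeta,\xi\rangle}\,d\xi$ and pass to polar coordinates $\xi=\sigma\omega$ with $\omega\in\mathbb{S}^n_+$ and $\sigma\in\mathbb{R}$. Because $\sigma$ ranges over the whole real line while $\omega$ is confined to the upper hemisphere, the map $(\sigma,\omega)\mapsto\sigma\omega$ covers $\mathbb{R}^{n+1}$ up to a null set, with $d\xi=\lvert\sigma\rvert^n\,d\sigma\,d\omega$; this is precisely where $\mathbb{S}^n_+$ and the weight $\lvert\sigma\rvert^n$ enter. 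Substituting the slice identity gives
\[
F(\zeta)=\frac{1}{(2\pi)^{n+1}}\int_{\mathbb{S}^n_+}\left(\int_{-\infty}^\infty\lvert\sigma\rvert^n\,\widetilde{\mathcal{X}F}(\omega,\sigma)\,e^{i\sigma\langle\zeta,\omega\rangle}\,d\sigma\right)d\omega,
\]
so the whole problem reduces to interpreting the inner integral, which is $2\pi$ times the inverse one-dimensional Fourier transform of the multiplier $\lvert\sigma\rvert^n$ applied to $\mathcal{X}F(\omega,\cdot)$, evaluated at $t=\langle\zeta,\omega\rangle$.

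The final step is to read off the operator attached to the multiplier $\lvert\sigma\rvert^n$. Since $(i\sigma)^n$ corresponds to $\partial_t^n$, when $n$ is even one has $\lvert\sigma\rvert^n=\sigma^n=(-1)^{n/2}(i\sigma)^n$, so the inner integral equals $2\pi(-1)^{n/2}\partial_t^n\mathcal{X}F(\omega,\langle\zeta,\omega\rangle)$, and collecting the powers of $2\pi$ yields the even formula. When $n$ is odd the multiplier factors as $\lvert\sigma\rvert^n=(\sgn\sigma)\,\sigma^n$; here $\sgn\sigma$ is the Fourier multiplier of $i$ times the Hilbert transform $Hg(t)=\pi^{-1}\operatorname{pv}\int(t-\tau)^{-1}g(\tau)\,d\tau$, and combining its contribution with that of $\sigma^n$ produces $(-1)^{(n-1)/2}\partial_t^nH\mathcal{X}F$. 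Because $H$ commutes with $\partial_t$, this equals $(-1)^{(n-1)/2}\pi^{-1}\operatorname{pv}\int(\langle\zeta,\omega\rangle-t)^{-1}\partial_t^n\mathcal{X}F(\omega,t)\,dt$, which after collecting constants is exactly the odd formula; writing $\zeta=(\xi,\eta)$ finishes both cases.

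I expect the odd-dimensional case to be the delicate point: the multiplier $\lvert\sigma\rvert^n$ is non-polynomial, so one must justify replacing $\sgn\sigma$ by the Hilbert-transform multiplier and rewriting the resulting convolution as a principal-value integral, all while tracking the factors of $i$ and the sign $(-1)^{(n-1)/2}$. The bookkeeping of these signs and of the powers of $2\pi$, rather than any deep analytic issue, is where the argument most easily goes astray; on Schwartz functions all the integrals converge and every interchange is routine. If one prefers to bypass this computation entirely, the statement is classical and may simply be quoted from the standard treatments of the Radon transform (e.g.\ Helgason or Natterer).
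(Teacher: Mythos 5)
Your argument is correct, and it is the classical Fourier-slice proof; the paper itself does not prove Theorem~\ref{theorem:inversionF} at all but simply quotes it from Palamodov (Corollary~2.6, p.~33), so your proposal supplies a proof where the paper supplies a citation. The structure is sound: the slice identity $\widetilde{\mathcal{X}F}(\omega,\sigma)=\hat F(\sigma\omega)$, the polar decomposition $d\xi=\lvert\sigma\rvert^n\,d\sigma\,d\omega$ over $\mathbb{R}\times\mathbb{S}^n_+$ (which is exactly why the hemisphere appears with $\sigma$ running over all of $\mathbb{R}$), and the identification of the multiplier $\lvert\sigma\rvert^n$ all go through for Schwartz $F$, since $\mathcal{X}F(\omega,\cdot)$ is Schwartz in $t$ and every interchange is absolutely convergent. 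I checked the constants: for $n$ even, $\sigma^n=(-1)^{n/2}(i\sigma)^n$ gives the factor $(-1)^{n/2}(2\pi)^{-n}$; for $n$ odd, writing $\lvert\sigma\rvert^n=\sgn(\sigma)\,\sigma^n$ with $(-i)^n=(-i)(-1)^{(n-1)/2}$ and $\sgn(\sigma)\leftrightarrow iH$ (for the convention $\widehat{Hg}(\sigma)=-i\,\sgn(\sigma)\hat g(\sigma)$, $Hg(t)=\pi^{-1}\operatorname{pv}\int g(\tau)(t-\tau)^{-1}d\tau$) yields $2\pi\cdot\pi^{-1}\cdot(-1)^{(n-1)/2}$ on top of $(2\pi)^{-(n+1)}$, i.e.\ exactly $2(-1)^{(n-1)/2}(2\pi)^{-(n+1)}$. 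The one point to state explicitly if you write this up is the sign convention for the Hilbert-transform multiplier, since flipping it flips the sign of the odd-dimensional formula; with the convention above everything matches the statement. Your approach has the advantage of being self-contained and of making the hemisphere normalization transparent, at the cost of a page of multiplier bookkeeping that the paper avoids by citing the textbook result.
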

For Theorem~\ref{theorem:inversionF}, see, e.g., 
Corollary~2.6 and the Remark below in page 33 of Palamodov's textbook \cite{Palamodov2004}. 
It is important to mention that Theorem~\ref{theorem:inversionF} holds for 
smooth functions $F(\xi,\eta)$ satisfying 
$F(\xi,\eta)=O\bigl((1+\lvert\xi\rvert+\lvert\eta\rvert)\bigr)^{-d}$ 
with some $d>n$, 
compactly supported distributions, 
rapidly decaying Lebesgue measurable functions, and etc. 
See, e.g., \cite{Helgason2011} for the detail. 
  
\section{Coordinatization of hemisphere}
\label{section:coordinatization}
We introduce a coordinatization $s\in\mathbb{R}^n$ 
of the upper hemisphere $\mathbb{S}^n_+$. 
The polar coordinates 
$(\theta_1,\dotsc,\theta_n)\in(0,\pi)^n$ 
for the point $\omega\in\mathbb{S}^n_+$ is given by 
$$
\omega_1=\cos\theta_1, 
\quad
\omega_i=\sin\theta_1\dotsb\sin\theta_{i-1}\cdot\cos\theta_i
\quad
(i=1,\dotsc,n), 
\quad
\omega_{n+1}=\sin\theta_1\dotsb\sin\theta_n, 
$$
and the volume form $d\omega$ given by 
$$
d\omega
=
d\theta_1 
\quad
(n=1), 
\quad
d\omega
=
\left(
\prod_{i=1}^{n-1}
\sin^{n-i}\theta_i
\right)
d\theta_1 \dotsb d\theta_n
\quad
(n=2,3,4,\dotsc)
$$
is well-known. 
\par
Since $\omega_1^2+\dotsb+\omega_{n+1}^2=1$, 
we introduce new coordinates 
$s=(s_1,\dotsc,s_n)\in\mathbb{R}^n$ of $\mathbb{S}^n_+$ 
defined by 
\begin{equation}
\omega
=
\frac{(-s,1)}{\sqrt{1+\lvert{s}\rvert^2}},
\quad\text{i.e.,}\quad
(\omega_1,\dotsc,\omega_n,\omega_{n+1})
=
\frac{(-s_1,\dotsc,-s_n,1)}{\sqrt{1+\lvert{s}\rvert^2}}.
\label{equation:change222} 
\end{equation}
Note that $s$ moves in $\mathbb{R}^n$ 
if and only if $\theta$ moves $(0,\pi)^n$. 
Moreover we have 
$$
s_i
=
-
\frac{\cot\theta_i}{\sin\theta_{i+1}\dotsb\sin\theta_n}
\quad
(i=1,\dotsc,n-1), 
\quad
s_n
=
-
\cot\theta_n. 
$$
Elementary calculus yields 
$$
\frac{\partial s_i}{\partial \theta_j}
=
0
\quad
(i=2,\dotsc,n,\ j<i),
$$  
$$
\frac{\partial s_i}{\partial \theta_i}
=
\frac{1}{\sin^2\theta_i\cdot\sin\theta_{i+1}\dotsb\sin\theta_n}
\quad
(i=1,\dotsc,n-1), 
\quad
\frac{\partial s_n}{\partial \theta_n}
=
\frac{1}{\sin^2\theta_n}. 
$$
Hence we have
$$
\frac{\partial(s_1,\dotsc,s_n)}{\partial(\theta_1,\dotsc,\theta_n)}
=
\det
\begin{bmatrix}
\dfrac{\partial s_1}{\partial \theta_1} & \ast & \dotsb & \ast
\\
0 & \ddots & \ddots & \vdots
\\
\vdots & \ddots & \ddots & \ast
\\
0 & \dotsb & 0 & \dfrac{\partial s_n}{\partial s_n} 
\end{bmatrix}
=
\prod_{i=1}^n
\frac{\partial s_i}{\partial \theta_i}
=
\left(
\prod_{i=1}^n
\sin^{i+1}\theta_i
\right)^{-1},
$$
and 
\begin{align}
  d\omega
& =
  \left(
  \prod_{j=1}^{n-1}
  \sin^{n-j}\theta_j
  \right)
  \left\lvert
  \frac{\partial(s_1,\dotsc,s_n)}{\partial(\theta_1,\dotsc,\theta_n)}
  \right\rvert^{-1}
  ds
\nonumber
\\
& =
  \left(
  \prod_{j=1}^{n-1}
  \sin^{n-j}\theta_j
  \cdot
  \prod_{i=1}^n
  \sin^{i+1}\theta_i
  \right)
  ds
\nonumber
\\
& =
  \left(
  \prod_{i=1}^n
  \sin\theta_i
  \right)^{n+1}
  ds
  =
  \frac{1}{(1+\lvert{s}\rvert^2)^{(n+1)/2}}\
  ds.
\label{equation:jacobian1} 
\end{align}
We also use change of variables 
\begin{equation}
(\omega,t)
=
\left(
\frac{(-s,1)}{\sqrt{1+\lvert{s}\rvert^2}}, 
\frac{u}{\sqrt{1+\lvert{s}\rvert^2}}
\right). 
\label{equation:change111}
\end{equation}
In this case we have 
\begin{align*}
  \frac{\partial(\theta_1,\dotsc,\theta_n,t)}{\partial(s_1,\dotsc,s_n,u)}
& =
  \det
  \begin{bmatrix}
  \dfrac{\partial \theta_i}{\partial s_j} & \bm{0}
  \\
  \bm{0} & \dfrac{\partial t}{\partial u} 
  \end{bmatrix}
  =
  \frac{\partial(\theta_1,\dotsc,\theta_n)}{\partial(s_1,\dotsc,s_n)}
  \cdot
  \dfrac{\partial t}{\partial u}
\\
& =
  \left(
  \prod_{i=1}^n
  \sin^{i+1}\theta_i
  \right)
  \cdot
  \frac{1}{\sqrt{1+\lvert{s}\rvert^2}}, 
\end{align*}
and 
\begin{equation}
d\omega dt
=     
\frac{1}{(1+\lvert{s}\rvert^2)^{(n+2)/2}}\ dsdu.
\label{equation:jacobian2} 
\end{equation}
%
%
\section{Vanishing conditions}
\label{section:vanishing}
We prepare some lemmas related to vanishing conditions and symmetries. 
These lemmas are used for reducing our transforms  
to the standard Radon transform in the next section. 
We need the following lemma 
to make full use of the vanishing conditions. 
\begin{lemma}
\label{theorem:taylor} 
\quad
\begin{itemize}
\item[{\rm (i)}]
For $f(x,y) \in \mathscr{S}_{c,m}(\mathbb{R}^{n+1})$, 
\begin{align}
& f(x+c,y)
\nonumber
\\
  =
& \frac{x_2^{m_2+1} \dotsb x_n^{m_n+1}}{m_2! \dotsb m_n!}
  \int_0^1 \dotsb \int_0^1
  (1-t_2)^{m_2} \dotsb (1-t_n)^{m_n}
\nonumber
\\
  \times
& \frac{\partial^{m_2+\dotsb+m_n+n-1} f}{\partial x_2^{m_2+1} \dotsb \partial x_n^{m_n+1}}(x_1+c_1,t_2x_2+c_2,\dotsc,t_nx_n+c_n,y)
  dt_2 \dotsb dt_n,
\label{equation:taylor1}
\\
  =
& \frac{x_1^{m_1+1} \dotsb x_n^{m_n+1}}{m_1! \dotsb m_n!}
  \int_0^1 \dotsb \int_0^1
  (1-t_1)^{m_1} \dotsb (1-t_n)^{m_n}
\nonumber
\\
  \times
& \frac{\partial^{m_1+\dotsb+m_n+n} f}{\partial x_1^{m_1+1} \dotsb \partial x_n^{m_n+1}}(t_1x_1+c_1,\dotsc,t_nx_n+c_n,y)
  dt_1 \dotsb dt_n.
\label{equation:taylor2} 
\end{align}
\item[{\rm (ii)}] 
For $f(x,y) \in \mathscr{S}_{c,m}^\mathcal{R}(\mathbb{R}^{n+1})$, 
\begin{align}
& f(x+c,y)
\nonumber
\\
  =
& \frac{x_2^{m_2+1} \dotsb x_n^{m_n+1} y}{m_2! \dotsb m_n!}
  \int_0^1 \dotsb \int_0^1
  (1-t_2)^{m_2} \dotsb (1-t_n)^{m_n}
\nonumber
\\
  \times
& \frac{\partial^{m_2+\dotsb+m_n+n} f}{\partial x_2^{m_2+1} \dotsb \partial x_n^{m_n+1} \partial y}(x_1+c_1,t_2x_2+c_2,\dotsc,t_nx_n+c_n,\tau y)
  dt_2 \dotsb dt_n d\tau,
\label{equation:taylor3}
\\
  =
& \frac{x_1^{m_1+1} \dotsb x_n^{m_n+1} y}{m_1! \dotsb m_n!}
  \int_0^1 \dotsb \int_0^1
  (1-t_1)^{m_1} \dotsb (1-t_n)^{m_n}
\nonumber
\\
  \times
& \frac{\partial^{m_1+\dotsb+m_n+n+1} f}{\partial x_1^{m_1+1} \dotsb \partial x_n^{m_n+1} \partial y}(t_1x_1+c_1,\dotsc,t_nx_n+c_n,\tau y)
  dt_1 \dotsb dt_n d\tau.
\label{equation:taylor4} 
\end{align}
\end{itemize}
We can replace the role of $x_1$ in 
{\rm \eqref{equation:taylor1}} 
and 
{\rm \eqref{equation:taylor3}} 
by the other $x_i$, $i=2,\dotsc,n$. 
\end{lemma}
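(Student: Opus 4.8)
The plan is to prove everything by Taylor expansion with integral remainder in one variable at a time, using the vanishing conditions to annihilate the boundary terms. I will focus on the representation \eqref{equation:taylor2} first, since \eqref{equation:taylor1} is its one-variable-shorter analogue and the part~(ii) formulas are obtained by additionally expanding in $y$.

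First I would recall the single-variable Taylor formula with integral remainder: for a smooth function $g$ and a nonnegative integer $m$,
\begin{equation*}
g(\xi)
=
\sum_{k=0}^{m}
\frac{g^{(k)}(0)}{k!}\xi^k
+
\frac{\xi^{m+1}}{m!}
\int_0^1
(1-t)^m
g^{(m+1)}(t\xi)\,dt.
\end{equation*}
I would apply this to $g(\xi)=f(\xi+c_1,x_2+c_2,\dotsc,x_n+c_n,y)$ in the variable $\xi=x_1$, with $m=m_1$. Since $f\in\mathscr{S}_{c,m}(\mathbb{R}^{n+1})$, the vanishing conditions give $\partial_{x_1}^k f(x+c,y)\bigl\vert_{x_1=0}=0$ for $k=0,1,\dotsc,m_1$, so every term of the polynomial sum vanishes identically, leaving only the remainder. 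This produces the factor $x_1^{m_1+1}/m_1!$ and the single integral in $t_1$. I would then iterate: the resulting integrand still belongs to a Schwartz class satisfying the same vanishing conditions in $x_2,\dotsc,x_n$ (differentiating in $x_1$ and integrating in $t_1$ commute with setting $x_j=0$ for $j\ne1$), so I expand successively in $x_2,\dotsc,x_n$, each step contributing a factor $x_i^{m_i+1}/m_i!$ and an integral $\int_0^1(1-t_i)^{m_i}(\cdot)\,dt_i$. Collecting all $n$ steps yields \eqref{equation:taylor2}. Formula \eqref{equation:taylor1} follows identically except that the expansion in $x_1$ is omitted, which is legitimate because one does not need to use the $x_1$-vanishing there; this also makes transparent the final remark that $x_1$ may be replaced by any $x_i$.

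For part~(ii), the extra ingredient is the vanishing condition $f(x+c,0)=0$, which lets me expand once more in the $y$-variable. Applying the $m=0$ case of the Taylor formula to $h(\eta)=f(x+c,\eta)$ at $\eta=0$ gives $f(x+c,y)=y\int_0^1 \partial_y f(x+c,\tau y)\,d\tau$, since the zeroth-order term $f(x+c,0)$ vanishes. I would perform this $y$-expansion together with the $x_i$-expansions; because all these are expansions in distinct variables with integral remainders, they can be carried out in any order and simply multiply, yielding the factor $y$, the extra $\partial_y$, and the additional integration $d\tau$ in \eqref{equation:taylor3} and \eqref{equation:taylor4}.

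The only point requiring genuine care—and thus the main obstacle—is justifying the iteration: I must verify that after expanding in $x_1$ and folding the remainder into an integral over $t_1$, the new integrand still satisfies the hypotheses needed for the next expansion, and that differentiation under the integral sign and the interchange of the various $t_i$-integrations are legitimate. Both are routine consequences of the Schwartz decay of $f$ and all its derivatives, which guarantees uniform absolute integrability and smooth dependence on the remaining variables, so dominated convergence and Fubini apply throughout. Once this bookkeeping is in place, the formulas follow by direct assembly.
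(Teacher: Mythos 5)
Your proposal is correct and follows essentially the same route as the paper: iterated single-variable Taylor expansion with integral remainder, where the vanishing conditions from Definition~\ref{definition:functionspaces} annihilate the polynomial terms, and the key observation that these vanishing conditions persist under differentiation in the other variables justifies the iteration (the paper does this explicitly for $n=2$, expanding in $x_2$ and then $x_1$; your order is reversed but this is immaterial). Your treatment of part~(ii) via the order-zero expansion in $y$ using $f(x+c,0)=0$ is exactly the intended extension that the paper leaves to the reader.
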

\begin{proof}
Here we prove \eqref{equation:taylor1} and \eqref{equation:taylor2} for $n=2$. 
The other parts can be proved in the same way. We omit the detail. 
Suppose $f(x_1,x_2,y) \in \mathscr{S}_{c,m}^\mathcal{P}(\mathbb{R}^3)$. 
Since 
$$
\frac{\partial^k f}{\partial x_2^k}(x_1+c_1,c_2,y)=0,
\quad
k=0,1,\dotsc,m_2. 
$$
Taylor's formula gives
\begin{align}
  f(x_1+c_1,x_2+c_2,y)
& =
  \sum_{k=0}^{m_2}
  \frac{x_2^k}{k!}
  \frac{\partial^k f}{\partial x_2^k}(x_1+c_1,c_2,y)
\nonumber
\\
& +
  \frac{x_2^{m_2+1}}{m_2!}
  \int_0^1
  (1-t_2)^{m_2}
  \frac{\partial^{m_2+1} f}{\partial x_2^{m_2+1}}(x_1+c_1,t_2x_2+c_2,y)
  dt_2
\nonumber
\\
& =
  \frac{x_2^{m_2+1}}{m_2!}
  \int_0^1
  (1-t_2)^{m_2}
  \frac{\partial^{m_2+1} f}{\partial x_2^{m_2+1}}(x_1+c_1,t_2x_2+c_2,y)
  dt_2.
\label{equation:taylor101}
\end{align}
This is \eqref{equation:taylor1} for $n=2$. 
Since 
$$
\frac{\partial^k f}{\partial x_1^k}(c_1,x_2+c_2,y)=0,
\quad
k=0,1,\dotsc,m_1
$$
$$
\frac{\partial^{k+l} f}{\partial x_1^k \partial x_2^l}(c_1,x_2+c_2,y)=0,
\quad
k=0,1,\dotsc,m_1, l=0,1,2,3,\dotsc. 
$$
If we use this with $l=m_2+1$, we have 
\begin{align*}
& \frac{\partial^{m_2+1} f}{\partial x_2^{m_2+1}}(x_1+c_1,t_2x_2+c_2,y)
\\
  =
& \frac{x_1^{m_1+1}}{m_1!}
  \int_0^1
  (1-t_1)^{m_1}
  \frac{\partial^{m_1+m_2+2} f}{\partial x_1^{m_1+1} \partial x_2^{m_2+1}}(t_1x_1+c_1,t_2x_2+c_2,y)
  dt_1.  
\end{align*}
Substitute this into \eqref{equation:taylor101}. 
Thus we obtain 
\begin{align*}
  f(x_1+c_1,x_2+c_2,y)
& =
  \frac{x_1^{m_1+1} x_2^{m_2+1}}{m_1! m_2!}
  \int_0^1\int_0^1
  (1-t_1)^{m_1}(1-t_2)^{m_2}
\\
& \times
  \frac{\partial^{m_1+m_2+2} f}{\partial x_1^{m_1+1} \partial x_2^{m_2+1}}(t_1x_1+c_1,t_2x_2+c_2,y)
  dt_1 dt_2,   
\end{align*}
which is \eqref{equation:taylor2} for $n=2$. 
\end{proof} 
Now we introduce functions defined by $f$, 
which is used for reducing our transforms to the standard Radon transform. 
For $f(x,y)$, set 
\begin{align*}
  F_\alpha^\mathcal{P}(\xi,\eta)
& =
  \begin{cases}
  \dfrac{2^n f(\xi_1^{1/\alpha_1}+c_1,\dotsc,\xi_n^{1/\alpha_n}+c_n,\eta)}{\alpha_1\dotsb\alpha_n \cdot \xi_1^{(\alpha_1-1)/\alpha_1}\dotsb\xi_n^{(\alpha_n-1)/\alpha_n}} 
  &\ (\xi_1,\dotsc,\xi_n>0),
  \\
  0 
  &\ (\text{otherwise}),   
  \end{cases} 
\\
  F_\alpha^\mathcal{Q}(\xi,\eta)
& =
  \frac{f(\xi_1\lvert{\xi_1}\rvert^{-1+1/\alpha_1}+c_1,\dotsc,\xi_n\lvert\xi_n\rvert^{-1+1/\alpha_n}+c_n,\eta)}{\alpha_1\dotsb\alpha_n \cdot \xi_1^{(\alpha_1-1)/\alpha_1}\dotsb\xi_n^{(\alpha_n-1)/\alpha_n}} 
  \quad (\xi_1, \dotsc, \xi_n \ne 0),
\intertext{and}
  F_{\alpha,\beta}^\mathcal{R}(\xi,\eta)
& =
  \begin{cases}
  \dfrac{2^n f(\xi_1^{1/\alpha_1}+c_1,\dotsc,\xi_n^{1/\alpha_n}+c_n,\eta^{1/\beta})}{\alpha_1\dotsb\alpha_n \cdot \xi_1^{(\alpha_1-1)/\alpha_1}\dotsb\xi_n^{(\alpha_n-1)/\alpha_n} \eta^{1/\beta}} 
&\ (\xi_1,\dotsc,\xi_n, \eta>0),
\\
0 
&\ (\text{otherwise}).   
  \end{cases} 
\end{align*}
Note that 
$F_\alpha^\mathcal{P}(\xi,\eta)=2^n F_\alpha^\mathcal{Q}(\xi,\eta)$ 
for $\xi_1,\dotsc,\xi_n>0$. 
\begin{lemma}
\label{theorem:change}
Suppose that $\alpha_1 \geqq m_i-2$ for all $i=1,\dotsc,n$.
\begin{itemize}
\item[{\rm (i)}] 
For $f(x,y) \in \mathscr{S}_{c,m}^\mathcal{P}(\mathbb{R}^{n+1})$, 
\begin{equation}
f(x,y)
=
\frac{1}{2^n}
\left(
\prod_{i=1}^n
\alpha_i
\lvert{x_i-c_i}\rvert^{\alpha_i-1}
\right)
F_{\alpha}^\mathcal{P}
(\lvert{x_1-c_1}\rvert^{\alpha_1},\dotsc,\lvert{x_n-c_n}\rvert^{\alpha_n},y),
\label{equation:inverse1} 
\end{equation}
and for any $N>0$, there exists a constant $C_N>0$ such that 
\begin{equation}
\lvert{F_\alpha^\mathcal{P}(\xi,\eta)}\rvert
\leqq
C_N
(1+\lvert\xi\rvert+\lvert\eta\rvert)^{-N}. 
\label{equation:decay1} 
\end{equation}
Moreover, when $\xi\in(0,\infty)^n$ tends to the boundary, 
$F_\alpha^\mathcal{P}(\xi,\langle{s,\xi}\rangle+u)$ 
has a finite limit for any $(s,u)\in\mathbb{R}^n\times\mathbb{R}$.
\item[{\rm (ii)}] 
For $f(x,y) \in \mathscr{S}_{c,m}(\mathbb{R}^{n+1})$, 
\begin{align}
  f(x,y)
& =
  \left(
  \prod_{i=1}^n
  \alpha_i
  \lvert{x_i-c_i}\rvert^{\alpha_i-1}
  \right)
\nonumber
\\
& \times
  F_{\alpha}^\mathcal{Q}
  \bigl(
  (x_1-c_1)\lvert{x_1-c_1}\rvert^{\alpha_1-1},
  \dotsc,
  (x_n-c_n)\lvert{x_n-c_n}\rvert^{\alpha_n-1},
  y
  \bigr),
\label{equation:inverse2} 
\end{align}
and for any $N>0$, there exists a constant $C_N>0$ such that 
\begin{equation}
\lvert{F_\alpha^\mathcal{Q}(\xi,\eta)}\rvert
\leqq
C_N
(1+\lvert\xi\rvert+\lvert\eta\rvert)^{-N}. 
\label{equation:decay2} 
\end{equation}
Moreover, when $\sigma\xi\in(0,\infty)^n$ tends to the boundary, 
$F_\alpha^\mathcal{P}(\xi,\langle{s,\xi}\rangle+u)$ 
has a finite limit for any $(s,u)\in\mathbb{R}^n\times\mathbb{R}$. 
Here $\sigma=(\sigma_1,\dotsc,\sigma_n)\in\{\pm1\}^n$ and 
$\sigma\xi=(\sigma_1\xi_1,\dotsc,\sigma_n\xi_n)$.
\item[{\rm (iii)}] 
For $f(x,y) \in \mathscr{S}_{c,m}^\mathcal{R}(\mathbb{R}^{n+1})$, 
\begin{equation}
f(x,y)
=
\frac{1}{2^n}
\left(
\prod_{i=1}^n
\alpha_i
\lvert{x_i-c_i}\rvert^{\alpha_i-1}
\right) 
\lvert{y}\rvert 
\cdot
F_{\alpha,\beta}^\mathcal{R}
(\lvert{x_1-c_1}\rvert^{\alpha_1},\dotsc,\lvert{x_n-c_n}\rvert^{\alpha_n},\lvert{y}\rvert^\beta),
\label{equation:inverse3} 
\end{equation}
and for any $N>0$, there exists a constant $C_N>0$ such that 
\begin{equation}
\lvert{F_{\alpha,\beta}^\mathcal{R}(\xi,\eta)}\rvert
\leqq
C_N
(1+\lvert\xi\rvert+\lvert\eta\rvert)^{-N}. 
\label{equation:decay3} 
\end{equation}
Moreover, when $\xi\in(0,\infty)^n$ tends to the boundary, 
$F_{\alpha,\beta}^\mathcal{R}(\xi,\langle{s,\xi}\rangle+u)$ 
has a finite limit for any $(s,u)\in\mathbb{R}^n\times\mathbb{R}$.
\end{itemize}
\end{lemma}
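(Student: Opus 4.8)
The plan is to treat the three parts in parallel, since they share a single mechanism. First I would establish each algebraic identity \eqref{equation:inverse1}, \eqref{equation:inverse2}, \eqref{equation:inverse3} by direct substitution into the definitions of $F_\alpha^\mathcal{P}$, $F_\alpha^\mathcal{Q}$ and $F_{\alpha,\beta}^\mathcal{R}$. For (i), setting $\xi_i=\lvert{x_i-c_i}\rvert^{\alpha_i}$ gives $\xi_i^{1/\alpha_i}=\lvert{x_i-c_i}\rvert$ and $\xi_i^{(\alpha_i-1)/\alpha_i}=\lvert{x_i-c_i}\rvert^{\alpha_i-1}$, so the $i$-th argument of $f$ becomes $\lvert{x_i-c_i}\rvert+c_i$; the symmetry \eqref{equation:symmetry1} replaces this by $x_i$, and \eqref{equation:inverse1} follows. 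For (ii) the map $x_i-c_i\mapsto\xi_i=(x_i-c_i)\lvert{x_i-c_i}\rvert^{\alpha_i-1}$ is an odd bijection of $\mathbb{R}$ with inverse $\xi_i\mapsto\xi_i\lvert{\xi_i}\rvert^{-1+1/\alpha_i}$, so no symmetry is needed and \eqref{equation:inverse2} is immediate. For (iii) one additionally uses \eqref{equation:symmetry2} to replace $\lvert{y}\rvert$ by $y$ in the argument of $f$.

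The substance of the lemma is the decay estimates \eqref{equation:decay1}, \eqref{equation:decay2}, \eqref{equation:decay3} together with the boundary limits, and here two competing regimes must be separated. In the region where $\lvert\xi\rvert+\lvert\eta\rvert$ is large, the numerators are values of the Schwartz function $f$ at arguments tending to infinity, so the rapid decay of $f$ dominates the merely polynomial growth of the denominators $\prod_i\lvert\xi_i\rvert^{(\alpha_i-1)/\alpha_i}$ (and $\eta^{1/\beta}$ in the $\mathcal{R}$-case), yielding the bound $(1+\lvert\xi\rvert+\lvert\eta\rvert)^{-N}$ there. The delicate region is near the boundary of the domain of $F$, where some $\xi_i\to0$ (and, for (iii), also $\eta\to0$), since there the denominators are singular.

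To control this I would invoke Lemma~\ref{theorem:taylor}. For (i) and (ii), \eqref{equation:taylor2} writes $f(x+c,y)$ with an explicit factor $x_1^{m_1+1}\dotsb x_n^{m_n+1}$ times an integral of a derivative of $f$, which is itself Schwartz. After the substitution $\lvert{x_i-c_i}\rvert=\lvert\xi_i\rvert^{1/\alpha_i}$ this factor contributes $\lvert\xi_i\rvert^{(m_i+1)/\alpha_i}$, while the denominator of $F$ contributes $\lvert\xi_i\rvert^{-(\alpha_i-1)/\alpha_i}$, so the net power of $\lvert\xi_i\rvert$ is $(m_i-\alpha_i+2)/\alpha_i$, which is nonnegative exactly under the standing hypothesis $m_i\geqq\alpha_i-2$. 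Hence the singular factors cancel, $F$ remains bounded as each $\xi_i\to0$, and the finite-limit assertions follow. For (iii) the same argument disposes of the $\xi_i\to0$ directions, while the extra singularity $\eta^{-1/\beta}$ near $\eta=0$ is removed by the factor $y=\eta^{1/\beta}$ produced by \eqref{equation:taylor4}, which in turn rests on the vanishing condition $f(x+c,0)=0$ built into $\mathscr{S}_{c,m}^\mathcal{R}(\mathbb{R}^{n+1})$.

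The main obstacle, and the only place needing genuine care, is making the near-boundary estimate uniform in all variables simultaneously: one must apply the Taylor representation in every $x_i$ direction at once, as in \eqref{equation:taylor2} and \eqref{equation:taylor4}, and verify that the resulting higher derivatives of $f$, still being Schwartz, decay rapidly in the remaining variables, so that the cancellation of the singular powers and the large-argument decay combine into the single estimate in \eqref{equation:decay1}, \eqref{equation:decay2}, \eqref{equation:decay3}. Once this bookkeeping is complete, continuity up to the boundary, hence the existence of the finite limit of $F$ along the slice $\eta=\langle{s,\xi}\rangle+u$, is read off from the same representation.
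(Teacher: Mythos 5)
Your identities \eqref{equation:inverse1}--\eqref{equation:inverse3} and your treatment of the singular factors as $\xi_i\to0$ (and $\eta\to0$ in (iii)) are correct and match the paper: the prefactor $x_i^{m_i+1}$ supplied by Lemma~\ref{theorem:taylor} becomes $\xi_i^{(m_i+1)/\alpha_i}$ and cancels $\xi_i^{-(\alpha_i-1)/\alpha_i}$ precisely because $m_i\geqq\alpha_i-2$. The gap is in the decay estimates in the mixed regime where some $\xi_i$ is large while $\eta$ stays bounded and possibly some other $\xi_j$ is near $0$. Your first mechanism (direct Schwartz decay of $f$) fails there because the denominator $\prod_j\xi_j^{(\alpha_j-1)/\alpha_j}$ is not merely of ``polynomial growth'' --- it vanishes as $\xi_j\to0$. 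Your second mechanism, the representation \eqref{equation:taylor2} Taylor-expanded in \emph{every} $x_i$ at once, does not supply decay in $\xi_i$ either: the derivative of $f$ is evaluated at $t_i\xi_i^{1/\alpha_i}+c_i$, and the $t_i$-integration runs down to $t_i=0$, where this argument equals $c_i$ no matter how large $\xi_i$ is. A substitution $r=t_i\xi_i^{1/\alpha_i}$ shows the $t_i$-integral gains only a single factor $\xi_i^{-1/\alpha_i}$, which does not even offset the prefactor $\xi_i^{(m_i+2-\alpha_i)/\alpha_i}$, let alone produce $(1+\lvert\xi\rvert+\lvert\eta\rvert)^{-N}$ for arbitrary $N$.

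The repair is the paper's case analysis. When $\lvert\xi\rvert+\lvert\eta\rvert\geqq1$, at least one of $\lvert\eta\rvert,\xi_1,\dotsc,\xi_n$ is $\geqq(\lvert\xi\rvert+\lvert\eta\rvert)/(\sqrt{n}+1)$. If it is $\lvert\eta\rvert$, the fully expanded representation works, because the last argument of the $f$-derivative is exactly $\eta$, so its Schwartz decay is uniform in the $t$'s and dominates the polynomial prefactors. If instead it is some $\xi_i$, one must switch to the representation \eqref{equation:taylor1} (with the roles of the variables permuted so that $x_i$ plays the role of $x_1$), which expands in all $x_j$ with $j\ne i$ but leaves $x_i$ alone: then the $f$-derivative is evaluated at the genuinely large point $\xi_i^{1/\alpha_i}+c_i$, giving rapid decay in $\lvert\xi\rvert+\lvert\eta\rvert$, while the now-uncancelled factor $\xi_i^{-(\alpha_i-1)/\alpha_i}$ is harmless because $\xi_i$ is bounded below in this case. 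The same dichotomy is needed in parts (ii) and (iii) (in (iii) with $\eta^{1/\beta}$ treated via \eqref{equation:taylor3}--\eqref{equation:taylor4}). Without this second, partially expanded representation your argument does not close.
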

\begin{proof}
We prove only (i) here. 
(ii) and (iii) can be proved in the same way as (i). 
We omit the detail. 
\par
Suppose that $f\in\mathscr{S}_{c,m}^\mathcal{P}(\mathbb{R}^{n+1})$. 
Direct computation gives \eqref{equation:inverse1} immediately. 
Set $C_0=2^n/\alpha_1\dotsb\alpha_n$ for short. 
It suffices to consider 
$F_\alpha^\mathcal{P}(\xi,\eta)$ only in $\xi\in(0,\infty)^n$. 
Note that \eqref{equation:taylor2} implies that 
\begin{align}
  F_\alpha^\mathcal{P}(\xi,\eta)
& =
  \frac{C_0}{m_1! \dotsb m_n!}
  \prod_{i=1}^n
  \xi_i^{(m_i+2-\alpha_i)/\alpha_i} 
  \int_0^1\dotsb\int_0^1
  (1-t_1)^{m_1}\dotsb(1-t_n)^{m_n}
\nonumber
\\
& \times 
  \frac{\partial^{m_1+\dotsb+m_n+n} f}{\partial x_1^{m_1+1} \dotsb \partial x_n^{m_n+1}}
  (t_1\xi_1^{1/\alpha_1}+c_1,\dotsc,t_n\xi_n^{1/\alpha_n}+c_n,\eta)
  dt_1 \dotsb dt_n.
\label{equation:taylor201}
\end{align}
This shows that 
$F_\alpha^\mathcal{P}(\xi,\eta)$ 
is bounded for 
$\lvert\xi\rvert+\lvert\eta\rvert<1$ 
since 
$m_i+2-\alpha_i\geqq0$ for $i=1,\dotsc,n$.  
When $\lvert\xi\rvert+\lvert\eta\rvert\geqq1$, 
it follow that 
$\lvert\eta\rvert\geqq(\lvert\xi\rvert+\lvert\eta\rvert)/(\sqrt{n}+1)$ 
or 
$\xi_i\geqq(\lvert\xi\rvert+\lvert\eta\rvert)/(\sqrt{n}+1)$ 
for some $i=1,\dotsc,n$, say $i=1$. 
When 
$\lvert\eta\rvert\geqq(\lvert\xi\rvert+\lvert\eta\rvert)/(\sqrt{n}+1)\geqq1/(\sqrt{n}+1)$, 
\eqref{equation:taylor201} shows \eqref{equation:decay1}. 
Note that \eqref{equation:taylor1} implies that 
\begin{align}
  F_\alpha^\mathcal{P}(\xi,\eta)
& =
  \frac{C_0}{\xi_1^{(\alpha_1-1)/\alpha_1} m_2! \dotsb m_n!}
  \prod_{i=2}^n
  \xi_i^{(m_i+2-\alpha_i)/\alpha_i} 
  \int_0^1\dotsb\int_0^1
  (1-t_2)^{m_2}\dotsb(1-t_n)^{m_n}
\nonumber
\\
& \times 
  \frac{\partial^{m_2+\dotsb+m_n+n-1} f}{\partial x_2^{m_2+1} \dotsb \partial x_n^{m_n+1}}
  (\xi_1^{1/\alpha_1}+c_1,t_2\xi_2^{1/\alpha_2}+c_2,\dotsc,t_n\xi_n^{1/\alpha_n}+c_n,\eta)
  dt_2 \dotsb dt_n.
\label{equation:taylor202}
\end{align}
When 
$\xi_1\geqq(\lvert\xi\rvert+\lvert\eta\rvert)/(\sqrt{n}+1)\geqq1/(\sqrt{n}+1)$, 
\eqref{equation:taylor202} shows \eqref{equation:decay1}. 
Combining the above all, we prove \eqref{equation:decay1}. 
We can prove the existence and the finiteness of 
the limit of $F_\alpha^\mathcal{P}(\xi,\langle{s,\xi}\rangle+u)$ 
at the boundary of $(0,\infty)^n$ 
by using \eqref{equation:taylor201} and \eqref{equation:taylor202}. 
We omit the detail. 
\end{proof}
%
%
\section{Proof of Main Theorem}
\label{section:proofs}
We begin with computing 
$\mathcal{P}_{\alpha}f(s,u)$,  
$\mathcal{Q}_{\alpha}f(s,u)$ 
and  
$\mathcal{R}_{\alpha,\beta}f(s,u)$.  
\begin{lemma}
\label{theorem:compute} 
\quad 
\begin{itemize}
\item[{\rm (i)}] 
For $f(x,y) \in \mathscr{S}_{c,m}^\mathcal{P}(\mathbb{R}^{n+1})$, 
\begin{equation}
\mathcal{P}_{\alpha}f(s,u)
=
\frac{1}{\sqrt{1+\lvert{s}\rvert^2}}
\mathcal{X}F_\alpha^\mathcal{P}
\left(
\frac{(-s,1)}{\sqrt{1+\lvert{s}\rvert^2}},
\frac{u}{\sqrt{1+\lvert{s}\rvert^2}}
\right),
\label{equation:compute1} 
\end{equation}
\begin{equation}
\partial_u^n 
\mathcal{P}_{\alpha}f(s,u)
=
\frac{1}{(1+\lvert{s}\rvert^2)^{(n+1)/2}}
(\partial_t^n \mathcal{X}F_\alpha^\mathcal{P})
\left(
\frac{(-s,1)}{\sqrt{1+\lvert{s}\rvert^2}},
\frac{u}{\sqrt{1+\lvert{s}\rvert^2}}
\right).
\label{equation:compute2} 
\end{equation}
\item[{\rm (ii)}] 
For $f(x,y) \in \mathscr{S}_{c,m}(\mathbb{R}^{n+1})$, 
\begin{equation}
\mathcal{Q}_{\alpha}f(s,u)
=
\frac{1}{\sqrt{1+\lvert{s}\rvert^2}}
\mathcal{X}F_\alpha^\mathcal{Q}
\left(
\frac{(-s,1)}{\sqrt{1+\lvert{s}\rvert^2}},
\frac{u}{\sqrt{1+\lvert{s}\rvert^2}}
\right),
\label{equation:compute5} 
\end{equation}
\begin{equation}
\partial_u^n 
\mathcal{Q}_{\alpha}f(s,u)
=
\frac{1}{(1+\lvert{s}\rvert^2)^{(n+1)/2}}
(\partial_t^n \mathcal{X}F_\alpha^\mathcal{Q})
\left(
\frac{(-s,1)}{\sqrt{1+\lvert{s}\rvert^2}},
\frac{u}{\sqrt{1+\lvert{s}\rvert^2}}
\right).
\label{equation:compute6} 
\end{equation}
\item[{\rm (iii)}] 
For $f(x,y) \in \mathscr{S}_{c,m}^\mathcal{R}(\mathbb{R}^{n+1})$, 
\begin{equation}
\mathcal{R}_{\alpha,\beta}f(s,u)
=
\frac{1}{\sqrt{1+\lvert{s}\rvert^2}}
\mathcal{X}F_{\alpha,\beta}^\mathcal{R}
\left(
\frac{(-s,1)}{\sqrt{1+\lvert{s}\rvert^2}},
\frac{u}{\sqrt{1+\lvert{s}\rvert^2}}
\right),
\label{equation:compute3} 
\end{equation}
\begin{equation}
\partial_u^n 
\mathcal{R}_{\alpha,\beta}f(s,u)
=
\frac{1}{(1+\lvert{s}\rvert^2)^{(n+1)/2}}
(\partial_t^n \mathcal{X}F_{\alpha,\beta}^\mathcal{R})
\left(
\frac{(-s,1)}{\sqrt{1+\lvert{s}\rvert^2}},
\frac{u}{\sqrt{1+\lvert{s}\rvert^2}}
\right).
\label{equation:compute4} 
\end{equation}
\end{itemize}
\end{lemma}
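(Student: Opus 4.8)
The three parts are established in parallel, so the plan is to treat (i) in full and then indicate the modifications for (ii) and (iii). In each case the strategy is to turn the defining integral into one of the canonical form $\int_{\mathbb{R}^n}F(\xi,\langle s,\xi\rangle+u)\,d\xi$ and then invoke \eqref{equation:radonF}. For (i) I would start from the second expression for $\mathcal{P}_\alpha f(s,u)$, the integral over $\mathbb{R}^n$ of $f(x+c,\sum_i s_i|x_i|^{\alpha_i}+u)$. Because $f$ satisfies \eqref{equation:symmetry1}, the integrand is even in each $x_i$, so I fold the $2^n$ coordinate orthants onto $(0,\infty)^n$, picking up a factor $2^n$ and replacing $|x_i|^{\alpha_i}$ by $x_i^{\alpha_i}$. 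On the positive orthant I perform the change of variables $\xi_i=x_i^{\alpha_i}$, whose inverse is $x_i=\xi_i^{1/\alpha_i}$ and whose one-dimensional Jacobian is $dx_i=\alpha_i^{-1}\xi_i^{-(\alpha_i-1)/\alpha_i}\,d\xi_i$. Collecting the constants $\alpha_1\cdots\alpha_n$ and the powers $\xi_i^{-(\alpha_i-1)/\alpha_i}$ reproduces exactly the definition of $F_\alpha^\mathcal{P}(\xi,\eta)$ with $\eta=\langle s,\xi\rangle+u$. Since $F_\alpha^\mathcal{P}$ vanishes outside $(0,\infty)^n$, I extend the domain back to all of $\mathbb{R}^n$, getting $\mathcal{P}_\alpha f(s,u)=\int_{\mathbb{R}^n}F_\alpha^\mathcal{P}(\xi,\langle s,\xi\rangle+u)\,d\xi$, and \eqref{equation:radonF} then yields \eqref{equation:compute1}.

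For \eqref{equation:compute2} I would differentiate \eqref{equation:compute1} $n$ times in $u$. Since the argument $t=u/\sqrt{1+|s|^2}$ depends linearly on $u$ while $\omega=(-s,1)/\sqrt{1+|s|^2}$ does not, the chain rule gives $\partial_u=(1+|s|^2)^{-1/2}\partial_t$, so $\partial_u^n$ contributes a factor $(1+|s|^2)^{-n/2}$; combined with the prefactor $(1+|s|^2)^{-1/2}$ this produces the power $(1+|s|^2)^{-(n+1)/2}$. The same $\partial_u^n$ step gives \eqref{equation:compute6} and \eqref{equation:compute4} once the corresponding transform identities are in hand.

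Part (ii) is in one respect simpler: the map $x_i\mapsto x_i|x_i|^{\alpha_i-1}$ is an odd strictly increasing bijection of $\mathbb{R}$, so no symmetry hypothesis and no folding are needed and there is no factor $2^n$; the change of variables $\xi_i=x_i|x_i|^{\alpha_i-1}$, with inverse $x_i=\xi_i|\xi_i|^{-1+1/\alpha_i}$ and Jacobian $dx_i=\alpha_i^{-1}|\xi_i|^{-(\alpha_i-1)/\alpha_i}\,d\xi_i$, reproduces $F_\alpha^\mathcal{Q}$, and the rest is identical, giving \eqref{equation:compute5}. For part (iii) I would again use \eqref{equation:symmetry1} to fold the $x$-integral onto $(0,\infty)^n$ and set $\xi_i=x_i^{\alpha_i}$; writing $\eta=\langle s,\xi\rangle+u$, the constraint $\sum_i s_i|x_i|^{\alpha_i}+u>0$ becomes $\eta>0$, and the integrand $f(\dots,\eta^{1/\beta})/\eta^{1/\beta}$ together with the Jacobian matches $F_{\alpha,\beta}^\mathcal{R}(\xi,\eta)$, which vanishes for $\eta\le0$; \eqref{equation:radonF} then gives \eqref{equation:compute3}.

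The hard part is justifying these manipulations rigorously, since for $\alpha_i>1$ the Jacobian factor $\xi_i^{-(\alpha_i-1)/\alpha_i}$ is singular at the boundary of the orthant and $F_\alpha^\mathcal{P}$, $F_\alpha^\mathcal{Q}$, $F_{\alpha,\beta}^\mathcal{R}$ are not Schwartz functions. Here I rely on Lemma~\ref{theorem:change}: its rapid-decay estimates \eqref{equation:decay1}--\eqref{equation:decay3} and the finite boundary limits guarantee that the integrals $\int_{\mathbb{R}^n}F(\xi,\langle s,\xi\rangle+u)\,d\xi$ converge absolutely and that \eqref{equation:radonF}, valid for rapidly decaying measurable functions by the remark following Theorem~\ref{theorem:inversionF}, genuinely applies. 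The same decay bounds applied to the $\eta$-derivatives, which only differentiate $f$ in its last slot and hence leave the vanishing conditions of $\mathscr{S}_{c,m}(\mathbb{R}^{n+1})$ intact, justify commuting $\partial_t^n$ with the Radon integral and thus the passage to \eqref{equation:compute2}, \eqref{equation:compute6} and \eqref{equation:compute4}.
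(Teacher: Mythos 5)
Your proposal is correct and follows essentially the same route as the paper: fold the integral onto the positive orthant using \eqref{equation:symmetry1} (for (i) and (iii)), change variables $\xi_i=x_i^{\alpha_i}$ to match the definitions of $F_\alpha^\mathcal{P}$, $F_\alpha^\mathcal{Q}$, $F_{\alpha,\beta}^\mathcal{R}$, apply \eqref{equation:radonF}, and obtain the derivative identities by the chain rule $\partial_u=(1+\lvert s\rvert^2)^{-1/2}\partial_t$. Your explicit appeal to the decay estimates of Lemma~\ref{theorem:change} to justify the singular change of variables is a point the paper leaves implicit, but it is the intended justification.
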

\begin{proof}
Firstly, we prove (i). 
It suffices to prove \eqref{equation:compute1}. 
Using the symmetry \eqref{equation:symmetry1}, we have 
$$
\mathcal{P}_{\alpha}f(s,u)
=
2^n
\int_{(0,\infty)^n}
f\left(x+c,\sum_{i=1}^ns_ix_i^{\alpha_i}+u\right)
dx.
$$
We use the change of variables $x_i=\xi^{1/\alpha_i}$, $i=1,\dotsc,n$. 
Note that 
$$
\frac{dx_i}{d\xi_i}
=
\frac{1}{\alpha_i}
\cdot
\frac{1}{\xi_i^{(\alpha_i-1)/\alpha_i}},
\quad
x_i^{\alpha_i}=\xi_i. 
$$
By using this and \eqref{equation:radonF}, we deduce that 
\begin{align*}
  \mathcal{P}_{\alpha}f(s,u)
& =
  \int_{(0,\infty)^n}
  \frac{2^n f(\xi_1^{1/\alpha_1}+c_1,\dotsc,\xi_n^{1/\alpha_n}+c_n,\langle{s,\xi}\rangle+u)}{\alpha_1 \dotsb \alpha_n \cdot \xi_1^{(\alpha_1-1)/\alpha_1} \dotsb \xi_n^{(\alpha_n-1)/\alpha_n}}
  d\xi
\\
& =
  \int_{(0,\infty)^n}
  F_\alpha^\mathcal{P}(\xi,\langle{s,\xi}\rangle+u)
  d\xi
\\
& =
  \int_{\mathbb{R}^n}
  F_\alpha^\mathcal{P}(\xi,\langle{s,\xi}\rangle+u)
  d\xi
\\
& =
  \frac{1}{\sqrt{1+\lvert{s}\rvert^2}}
  \mathcal{X}F_\alpha^\mathcal{P}
  \left(
  \frac{(-s,1)}{\sqrt{1+\lvert{s}\rvert^2}},
  \frac{u}{\sqrt{1+\lvert{s}\rvert^2}}
  \right),
\end{align*}
which is \eqref{equation:compute1}. 
\par
(ii) can be proved in the same way as (i). We omit the detail. 
\par
Secondly, we prove (iii). 
It suffices to prove \eqref{equation:compute3}. 
Using the symmetry \eqref{equation:symmetry1}, we have 
$$
\mathcal{R}_{\alpha,\beta}f(s,u)
=
2^n
\int_{\substack{x\in(0,\infty)^n \\ \sum s_ix_i^{\alpha_i}+u>0}}
\frac{f\bigl(x+c,(\sum s_ix_i^{\alpha_i}+u)^{1/\beta}\bigr)}{(\sum s_ix_i^{\alpha_i}+u)^{1/\beta}}
dx.
$$
By using the change of variables $x_i=\xi^{1/\alpha_i}$, $i=1,\dotsc,n$, 
and \eqref{equation:radonF}, we deduce that 
\begin{align*}
  \mathcal{R}_{\alpha,\beta}f(s,u)
& =
  \int_{\substack{\xi\in(0,\infty)^n \\ \langle{s,\xi}\rangle+u>0}}
  \frac{2^n f\bigl(\xi_1^{1/\alpha_1}+c_1,\dotsc,\xi_n^{1/\alpha_n}+c_n,(\langle{s,\xi}\rangle+u)^{1/\beta}\bigr)}{\alpha_1 \dotsb \alpha_n \cdot \xi_1^{(\alpha_1-1)/\alpha_1} \dotsb \xi_n^{(\alpha_n-1)/\alpha_n} \cdot (\langle{s,\xi}\rangle+u)^{1/\beta}}
  d\xi
\\
& =
  \int_{(0,\infty)^n}
  F_{\alpha,\beta}^\mathcal{R}(\xi,\langle{s,\xi}\rangle+u)
  d\xi
\\
& =
  \int_{\mathbb{R}^n}
  F_{\alpha,\beta}^\mathcal{R}(\xi,\langle{s,\xi}\rangle+u)
  d\xi
\\
& =
  \frac{1}{\sqrt{1+\lvert{s}\rvert^2}}
  \mathcal{X}F_{\alpha,\beta}^\mathcal{R}
  \left(
  \frac{(-s,1)}{\sqrt{1+\lvert{s}\rvert^2}},
  \frac{u}{\sqrt{1+\lvert{s}\rvert^2}}
  \right),
\end{align*}
which is \eqref{equation:compute2}. 
\end{proof}
Finally we prove Theorem~\ref{theorem:main}.
\begin{proof}[Proof of Theorem~\ref{theorem:main}] 
Firstly, we prove (i). 
Suppose that 
$f(x,y) \in \mathscr{S}_{c,m}^\mathcal{P}(\mathbb{R}^{n+1})$, 
and $m_i\geqq\alpha_i-2$ for all $i=1,\dotsc,n$. 
When $n$ is odd, by using 
the identity \eqref{equation:inverse1}, 
the inversion formula of the standard Radon transform 
Theorem~\ref{theorem:inversionF}, 
the change of the variables \eqref{equation:change111}, 
and the identity \eqref{equation:compute2} in order, 
we deduce that 
\begin{align*}
  f(x,y)
& =
  \frac{1}{2^n}
  \left(
  \prod_{i=1}^n
  \alpha_i
  \lvert{x_i-c_i}\rvert^{\alpha_i-1}
  \right)
  F_{\alpha}^\mathcal{P}
  (\lvert{x_1-c_1}\rvert^{\alpha_1},\dotsc,\lvert{x_n-c_n}\rvert^{\alpha_n},y)
\\
& =
  \frac{4\cdot(-1)^{(n-1)/2}}{(4\pi)^{n+1}}
  \left(
  \prod_{i=1}^n
  \alpha_i
  \lvert{x_i-c_i}\rvert^{\alpha_i-1}
  \right)
\\
& \times
  \int_{\mathbb{S}^n_+}
  \left(
  \operatorname{pv}
  \int_{-\infty}^\infty
  \frac{\partial_t^n \mathcal{X}F_\alpha^\mathcal{P}(\omega,t)}{\sum \omega_i\lvert{x_i-c_i}\rvert^{\alpha_i}+\omega_{n+1}y-t}
  dt
  \right)
  d\omega
\\
& =
  \frac{4\cdot(-1)^{(n-1)/2}}{(4\pi)^{n+1}}
  \left(
  \prod_{i=1}^n
  \alpha_i
  \lvert{x_i-c_i}\rvert^{\alpha_i-1}
  \right)
\\
& \times
  \int_{\mathbb{R}^n}
  \left(
  \operatorname{pv}
  \int_{-\infty}^\infty
  \frac{\sqrt{1+\lvert{s}\rvert^2}}{y-\sum s_i\lvert{x_i-c_i}\rvert^{\alpha_i}-u}
  \right.
\\
& \times
  \left.
  (\partial_t^n \mathcal{X}F_\alpha^\mathcal{P})
  \left(\frac{(-s,1)}{\sqrt{1+\lvert{s}\rvert^2}},\frac{u}{\sqrt{1+\lvert{s}\rvert^2}}\right)
  \cdot
  \frac{1}{(1+\lvert{s}\rvert^2)^{(n+2)/2}}
  du
  \right)
  ds
\\
& =
  \frac{4\cdot(-1)^{(n-1)/2}}{(4\pi)^{n+1}}
  \left(
  \prod_{i=1}^n
  \alpha_i
  \lvert{x_i-c_i}\rvert^{\alpha_i-1}
  \right)
\\
& \times
  \int_{\mathbb{R}^n}
  \left(
  \operatorname{pv}
  \int_{-\infty}^\infty
  \frac{\partial_u^n \mathcal{P}_{\alpha}f(s,u)}{y-\sum s_i\lvert{x_i-c_i}\rvert^{\alpha_i}-u}
  du
  \right)
  ds,
\end{align*}
which is \eqref{equation:inversion1}.
\par
When $n$ is even, by using 
the identity \eqref{equation:inverse1}, 
the inversion formula of the standard Radon transform 
Theorem~\ref{theorem:inversionF}, 
the change of the variables \eqref{equation:change222}, 
and the identity \eqref{equation:compute2} in order, 
we deduce that 
\begin{align*}
  f(x,y)
& =
  \frac{1}{2^n}
  \left(
  \prod_{i=1}^n
  \alpha_i
  \lvert{x_i-c_i}\rvert^{\alpha_i-1}
  \right)
  F_{\alpha}^\mathcal{P}
  (\lvert{x_1-c_1}\rvert^{\alpha_1},\dotsc,\lvert{x_n-c_n}\rvert^{\alpha_n},y)
\\
& =
  \frac{(-1)^{n/2}}{(4\pi)^n}
  \left(
  \prod_{i=1}^n
  \alpha_i
  \lvert{x_i-c_i}\rvert^{\alpha_i-1}
  \right)
\\
& \times
  \int_{\mathbb{S}^n_+}
  \partial_t^n \mathcal{X}F_\alpha^\mathcal{P}
  \left(\omega,\sum_{i=1}^n\omega_i\lvert{x_i-c_i}\rvert^{\alpha_i}+\omega_{n+1}y\right)
  d\omega
\\
& =
  \frac{(-1)^{n/2}}{(4\pi)^n}
  \left(
  \prod_{i=1}^n
  \alpha_i
  \lvert{x_i-c_i}\rvert^{\alpha_i-1}
  \right)
\\
& \times
  \int_{\mathbb{R}^n}
  \frac{1}{(1+\lvert{s}\rvert^2)^{(n+1)/2}}
  \cdot
  (\partial_t^n \mathcal{X}F_\alpha^\mathcal{P})
\left(\frac{(-s,1)}{\sqrt{1+\lvert{s}\rvert^2}},\frac{y-\sum s_i \lvert{x_i-c_i}\rvert^{\alpha_i}}{\sqrt{1+\lvert{s}\rvert^2}}\right)
  ds
\\
& =
  \frac{(-1)^{n/2}}{(4\pi)^n}
  \left(
  \prod_{i=1}^n
  \alpha_i
  \lvert{x_i-c_i}\rvert^{\alpha_i-1}
  \right)
\\
& \times
  \int_{\mathbb{R}^n}
  (\partial_u^n \mathcal{P}_{\alpha}f)
  \left(
  s,
  y-\sum_{i=1}^ns_i \lvert{x_i-c_i}\rvert^{\alpha_i}
  \right)
  ds,
\end{align*}
which is \eqref{equation:inversion2}.
\par
Secondly, we prove (ii). 
Suppose that 
$f(x,y) \in \mathscr{S}_{c,m}(\mathbb{R}^{n+1})$, 
and $m_i\geqq\alpha_i-2$ for all $i=1,\dotsc,n$. 
When $n$ is odd, 
applying 
the inversion formula of the standard Radon transform 
Theorem~\ref{theorem:inversionF}, 
the change of the variables 
\eqref{equation:change111} 
and 
the identity \eqref{equation:compute6} in order,  
we obtain \eqref{equation:inversion5} 
in the exactly same way as \eqref{equation:inversion1}. 
When $n$ is even, 
applying 
the inversion formula of the standard Radon transform 
Theorem~\ref{theorem:inversionF}, 
the change of the variables 
\eqref{equation:change222} 
and 
the identity \eqref{equation:compute6} in order,  
we obtain \eqref{equation:inversion6} 
in the exactly same way as \eqref{equation:inversion2}. 
We omit the detail. 
\par
Finally, we prove (iii). 
Suppose that 
$f(x,y) \in \mathscr{S}_{c,m}^\mathcal{R}(\mathbb{R}^{n+1})$, 
and $m_i\geqq\alpha_i-2$ for all $i=1,\dotsc,n$. 
When $n$ is odd, 
applying 
the inversion formula of the standard Radon transform 
Theorem~\ref{theorem:inversionF}, 
the change of the variables 
\eqref{equation:change111} 
and 
the identity \eqref{equation:compute4} in order,  
we obtain \eqref{equation:inversion3} 
in the exactly same way as \eqref{equation:inversion1}. 
When $n$ is even, 
applying 
the inversion formula of the standard Radon transform 
Theorem~\ref{theorem:inversionF}, 
the change of the variables 
\eqref{equation:change222} 
and 
the identity \eqref{equation:compute4} in order,  
we obtain \eqref{equation:inversion4} 
in the exactly same way as \eqref{equation:inversion2}. 
We omit the detail. 
\end{proof}
%
%
%
%
%
%
\begin{center}
{\sc Acknowledgments} 
\end{center}
\par
The author thank referees for reading the manuscript carefully, 
and giving valuable comments and helpful suggestions. 
%
%


\begin{thebibliography}{99}
\bibitem{Bickel2000}
Bickel, S.H.: 
{\it Focusing aspects of the hyperbolic Radon transform}, 
Geophysics {\bf 65} (2000), pp.652--655.

\bibitem{Chihara2019} 
Chihara, H.: 
{\it Inversion of seismic-type Radon transforms on the plane}, 
arXiv:1910.02645. 

\bibitem{Cormack1981}
Cormack, A.M.: 
{\it The Radon transform on a family of curves in the plane}, 
Proc. Amer. Math. Soc., {\bf 83} (1981), pp.325--330.

\bibitem{Cormack1982}
Cormack, A.M.: 
{\it The Radon transform on a family of curves in the plane II}, 
Proc. Amer. Math. Soc., {\bf 86} (1982), pp.293--298.

\bibitem{Denecker1998} 
Denecker, K., van Overloop, J., Sommen, F.: 
{\it The general quadratic Radon transform},
Inverse Problems {\bf 14} (1998), pp.615--633.

\bibitem{Hampson1986}
Hampson, D.: 
{\it Inverse velocity stacking for multiple elimination}, 
Journal of the Canadian Society of Exploration Geophysics, 
{\bf 22} (1986), pp.44--55.

\bibitem{Helgason2011}
Helgason, S.:
``Integral Geometry and Radon Transforms'', 
Springer, 2011.

\bibitem{Jollivet2011} 
Jollivet, A., Nguyen, M.K., Truong, T.T.:
{\it Properties and Inversion of a new Radon Transform on
parabolas with fixed axis direction in $\mathbb{R}^2$}, 
manuscript, 2011.

\bibitem{Kuchment2016}
Kuchment, P. and Terzioglu, F.: 
{\it Three-dimensional image reconstruction from Compton camera data}, 
SIAM J. Imaging Sci., {\bf 9} (2016), pp.1708--17125.

\bibitem{Maeland1998}
Maeland, E.: 
{\it Focusing aspects of the parabolic Radon transform}, 
Geophysics {\bf 63} (1998), pp.1708--1715.

\bibitem{Maeland2000}
Maeland, E.: 
{\it An overlooked aspect of the parabolic Radon transform}, 
Geophysics {\bf 65} (2000), pp.1326--1329.

\bibitem{Moon2014} 
Moon, S.:  
{\it On the determination of a function from an elliptical Radon
transform}, 
J. Math. Anal. Appl., {\bf 416} (2014), pp.724--734.

\bibitem{Moon2016a} 
Moon, S. and  Heo, J.:  
{\it Inversion of the elliptical Radon transform arising in migration
imaging using the regular Radon transform}, 
J. Math. Anal. Appl., {\bf 436} (2016), pp.138--148.

\bibitem{Moon2016b} 
Moon, S.:  
{\it Inversion of the seismic parabolic Radon transform 
and the seismic hyperbolic Radon transform}, 
Inverse Probl. Sci. Eng., {\bf 24} (2016), pp.317--327.

\bibitem{Palamodov2004} 
Palamodov, V.: 
"Reconstructive Integral Geometry", Springer, 2004.

\bibitem{Ustaoglu2017}
Ustaoglu, Z.:  
{\it On the inversion of a generalized Radon transform of seismic type}, 
J.\ Math. Anal. Appl., {\bf 453} (2017), pp.287--303. 

\end{thebibliography}
\end{document}